\newcommand{\eql}{\alph*}
\newtheorem{theorem}{Theorem}
\newtheorem{lemma}[theorem]{Lemma}
\newtheorem{corollary}[theorem]{Corollary}
\newtheorem{remark}[theorem]{Remark}
\theoremstyle{definition}
\newtheorem{definition}[theorem]{Definition}
\newcommand{\catname}[1]{\mathsf{#1}}
\DeclareMathOperator{\Fam}{\catname{Fam}}
\newcommand{\Cat}{\catname{Cat}}
\newcommand{\CAT}{\catname{CAT}}
\newcommand{\Ord}{\catname{Ord}}
\newcommand{\Set}{\catname{Set}}
\newcommand{\Top}{\catname{Top}}
\newcommand{\Alg}{\catname{Alg}}
\newcommand{\CoKl}{\catname{CoKl}}
\newcommand{\Desc}{\catname{Desc}}
\newcommand{\FF}{\catname{FF}}
\newcommand{\Cla}{\mathscr{C}}
\newcommand{\Clc}{\mathscr{D}}
\newcommand{\subname}[1]{\mathsf{#1}}
\newcommand{\op}{\subname{op}}
\newcommand{\co}{\subname{co}}
\newcommand{\opname}[1]{\mathsf{#1}}
\newcommand{\id}{\opname{id}}
\DeclareMathOperator{\ob}{\opname{ob}}
\newcommand{\adj}{\dashv}
\newcommand{\dash}{\text{-}}
\newcommand{\comma}{\downarrow}
\newcommand{\lcomma}{\Downarrow} 
\newcommand{\iso}{\cong}
\newcommand{\eqv}{\simeq}
\newcommand{\Ff}{\mathcal F}
\newcommand{\Hh}{\mathcal H}
\renewcommand{\epsilon}{\varepsilon}
\newcommand{\bicat}[1]{\mathbb{#1}}
\newcommand{\cat}[1]{\mathcal{#1}}
\newcommand{\init}{\mathsf{0}}
\newcommand{\term}{\mathsf{1}}
\begin{document}

\title{Functors Preserving Effective Descent Morphisms}

\date{\today} 

\author[F. Lucatelli Nunes]{Fernando Lucatelli Nunes}

\author[R. Prezado]{Rui Prezado}

\address[1,2]{University of Coimbra}
\address[1]{Utrecht University}
\address[2]{University of Aveiro}

\email[1]{f.lucatellinunes@uu.nl}
\email[2]{ruiprezado@ua.pt}

\keywords{Grothendieck fibrations, Grothendieck construction, lax comma
categories, Grothendieck descent theory, effective descent morphism,
Artin gluing, scone, total categories}
 
\subjclass{
    18A05, 
    18A20, 
    18A22, 
    18A25, 
    18A35, 
    18A40, 
    18D30, 
    18F20, 
    18N10, 
    18N15      
}

\begin{abstract}
Effective descent morphisms, originally defined in Grothendieck descent
theory, form a class of special morphisms within a category. Essentially, an
effective descent morphism enables bundles over its codomain to be fully
described as bundles over its domain endowed with additional algebraic structure, called descent data. Like the study of epimorphisms, studying
effective descent morphisms is interesting in its own right, providing deeper
insights into the category under consideration. Moreover, studying these
morphisms is part of the foundations of several applications of descent theory,
notably including Janelidze-Galois theory, also known as categorical Galois
theory. 

Traditionally, the study of effective descent morphisms has focused on
investigating and exploiting the reflection properties of certain functors. In
contrast, we introduce a novel approach by establishing general results on the
preservation of effective descent morphisms. We demonstrate that these
preservation results enhance the toolkit for studying such morphisms, by
observing that all Grothendieck (op)fibrations satisfying mild conditions fit
our framework. To illustrate these findings, we provide several examples of
Grothendieck (op)fibrations that preserve effective descent morphisms, including
topological functors and other forgetful functors of significant interest in the
literature. 
\end{abstract}

\maketitle

\section{Introduction}
  \label{sect:introduction}
  Given an abstract notion of categories of bundles over each object $e$ of a
category $\cat A $ -- usually provided by a fibration $\Ff$ over $\cat A$   --
the \textit{effective descent morphisms} are those morphisms \( p \colon e \to b
\) for which the corresponding change-of-base functor witnesses the bundles over
\( b \) as bundles over \(e \) with additional algebraic structure -- which we call \textit{descent data}.

For any such notion of bundle, studying and characterizing effective descent
morphisms is compelling in its own right, in addition to being a fundamental
aspect of Grothendieck descent theory and its applications, including
Janelidze-Galois theory~\cite{BJ01, JSS93}. For general and comprehensive introductions to descent theory and effective descent morphisms, we refer the reader to \cite{JT97, Luc22, Pav91, PreTh}.

For the sake of clarity and to present our main motivating examples, we decided
to focus on the quintessential context where morphisms with codomain $e$ are the
bundles over $e$; more precisely, the categories of bundles over \( e \) are
just the comma categories \( \cat A \comma e \). Although we claim that our work
can be applied to more general notions of bundles -- provided by more
general settings following \cite{Pav91}, or, more generally, by
pseudocosimplicial categories in the spirit of \cite{Luc18a} --  our results are
already noteworthy and relevant within our scope.

We study a slight generalization of the classical notion of effective descent morphisms with respect to the \textit{codomain fibration} relaxing the traditional requirement that the category must have all pullbacks.
We outline our setting below.

Let \( \cat A \) be a category and 
 \( p \colon e \to b \) a morphism in $\cat A$. 
We assume that morphisms along $p$ exist. We, then, consider the equivalence relation \( \mathtt{Eq}\left( p\right)  \), given by Diagram \eqref{eq:diagram-equivalence-relation}, induced by the \textit{kernel pair}  of \( p \).

\begin{equation}\label{eq:diagram-equivalence-relation}
  \begin{tikzcd}
e 
          \ar[r]
      & e \times_b e \ar[l,shift left=2mm]
                     \ar[l,shift right=2mm]
      & e \times_b e \times_b e  \ar[l]
                     \ar[l,shift left=2mm]
                     \ar[l,shift right=2mm]
  \end{tikzcd}
\end{equation}
Moreover, we consider the usual \textit{change-of-base functor}
\begin{equation} 
  \label{eq:change-of-base}
  p^* \colon \cat A \comma b \to \cat A \comma e 
\end{equation}
defined by taking pullbacks along \( p \). Under our assumptions, we can similarly define change-of-base functors for each of the morphisms appearing in Diagram \eqref{eq:diagram-equivalence-relation}. In other words, we get the (truncated) pseudocosimplicial category
\begin{equation}
\label{eq:pseudo-cosimplicial-category}
  \begin{tikzcd}
    \cat A \comma e \ar[r,shift left=2mm]
                        \ar[r,shift right=2mm]
      & \cat A \comma (e \times_b e) \ar[l] \ar[r,shift left=2mm]
                        \ar[r] \ar[r,shift right=2mm]
      & \cat A \comma (e \times_b e \times_b e) 
  \end{tikzcd}
\end{equation}
by taking the respective change-of-base functors. The conical bilimit of \eqref{eq:pseudo-cosimplicial-category}  -- called \textit{descent category} or \textit{category of
descent data} -- induces a factorization 
\begin{equation}
  \label{Eq:descent-factorization}
  \begin{tikzcd}
    \cat A \comma b \ar[rr, "p^*"] \ar[rd,"\mathcal K_p",swap]
      && \cat A \comma e \\
    & \Desc(p) \ar[ru]
  \end{tikzcd}
\end{equation}
called the \textit{descent factorization of $p$} -- see, for instance,
\cite[Sections 3 and 4]{Luc21} or \cite{Luc18b, Luc18a} for this approach, and  \cite{Luc22, Str04} for details on the definitions of \textit{descent category} within our setting. We say that \textit{$p$ is of
effective descent} if the comparison functor $\mathcal K_p$ is an equivalence. 

By the so-called Bénabou-Roubaud theorem~\cite{BR70}, even in this generalized
framework (see \cite[Theorem~8.5]{Luc18a} for the generalized version), the Diagram \eqref{Eq:descent-factorization}
coincides with the Eilenberg-Moore factorization 
\begin{equation*}
  \label{Eq:Eilenberg-factorization}
  \begin{tikzcd}
    \cat A \comma b \ar[rr, "p^*"] \ar[rd]
      && \cat A \comma e \\
    & T^p\dash\Alg \ar[ru]
  \end{tikzcd}
\end{equation*}
of the adjunction \( p_! \adj p^* \), up to a suitable pseudonatural equivalence.
For a detailed exposition of the considerations and perspectives outlined above, we refer the interested readers to \cite{Luc18a, Luc21, Luc22}.

Henceforth, we only need to keep in mind
the following consequence, which we \textit{adopt} herein as the definition of
effective descent morphisms:
\begin{equation*}
  \textit{$p$ is of effective descent morphism if and only if \( p^* \) is
  monadic.}
\end{equation*}

Characterization results are available for well-behaved classes of categories, such as \textit{Barr-exact} categories and \textit{locally cartesian closed} categories, where the effective descent morphisms are precisely the regular epimorphisms~\cite{JST04, JT94, Cre99}. However, in general, within the setting of our present work, characterizing effective descent morphisms often poses a challenging problem even for specific instances of categories $\cat A$ -- 
see, for example, earlier works like \cite{JT94, JST04, JS14} and more recent studies such as \cite{CJ24, PL24}. 
A celebrated illustration of this complexity is the category \( \Top \) of topological spaces and continuous maps. The characterization of effective descent morphisms in  \( \Top \), solved in \cite{RT94} and later refined in \cite{CH02, CJ11, CJ20}, demonstrates the involved nature of the problem.

In the literature, the study of effective descent
morphisms \( \cat A \) has predominantly relied on
\textit{reflection} results -- for an overview of the classical results in this area, see \cite[Section~1]{Luc18a}. 
For instance, classical reflection-based approaches have been employed in works such as \cite{CH04, CL23, RT94}. Additionally, \cite{Luc18a} introduced novel techniques to this framework, which have been applied in recent studies like \cite{CJ24, PL23, PL24}.
 
An overview of the reflection-based paradigm can be given as follows. In order to study the effective descent morphisms of a category $\cat A$, we start by looking for a category \( \cat B
\) whose effective descent morphisms are better understood and a
pullback-preserving functor \( U \colon \cat A \to \cat B \). In this setting, fall under one of the following two situations:
\begin{enumerate}[label=(\arabic*)]
  \item
    The functor \( U \colon \cat A \to \cat B \) is fully faithful. Most
    classical descent results fall under this setting, which is justified by \cite[Corollary 2.7.2]{JT94} or \cite[Corollary 9.9]{Luc18a} for the general case.
   In our setting,  Theorem~\ref{thm:obs} plays the fundamental role of this approach -- it characterizes when $p $ is of effective descent, given that  $U(p)$ is; in other words, when the property of being effective descent is reflected.

  \item
    More generally, the functor \( U \colon \cat A \to \cat B \) is a forgetful functor, in the sense that the objects of \( \cat A \) can be seen as objects of \( \cat B \) with additional structure. More precisely, we attempt to exhibit
    \( \cat A \) as a \textit{bilimit} of categories whose descent and
    effective descent morphisms are reasonably understood-- such an approach is justified by the viewpoint of \cite{Luc18a} or, more specifically, Corollary~9.5 and  \textit{ibid}. Again, these results give conditions under which the property of effective descent morphisms is reflected by $U$.
\end{enumerate}

Naturally, unless we are endowed with a preservation result, reflection results for a functor \( U \colon \cat A \to \cat B \) can only provide \textit{sufficient conditions} for a morphism to be effective for descent, and depend on the knowledge of effective descent
morphisms in \( \cat B \).  

The goal of this paper is to provide  \textit{general} preservation results for effective descent morphisms. For the purposes of the characterization problem of effective descent morphisms, we are interested in finding guiding principles
that can be applied to a wide variety of categorical settings. 

The preservation tools and techniques we present here aim to aid
our understanding of effective descent morphisms for a general class of categories, especially in the setting of the study of effective descent
morphisms in categories of generalized categorical structures -- for instance, in the context of 
\cite{PL23, PL24, CL23, PreTh, CLP24, CP24, CJ24}.

\subsection*{Outline of the paper:} 

We recall the basic notions of descent theory in
Section~\ref{sect:preliminaries}. We also take the opportunity to recall the
notion of Grothendieck (op)constructions, and we review the basic definitions needed for our work.

In Section~\ref{sect:descent}, we provide an abstract result on preservation of
effective descent morphisms. More specifically, we observe that a comonad with a
cartesian counit always preserves effective descent morphisms
(Lemma~\ref{lem:comonad}). In fact, we say more: for any adjunction \( L \adj U
\) with a cartesian counit, if \( L \) reflects effective descent morphisms
cartesian counit, then \( U \) preserves them. By listing some reasonable
conditions under which \( L \) reflects effective descent morphism, we obtain
Theorem~\ref{thm:use.counit.mono}.

Section~\ref{sect:preservation} contains our main results, Theorems
\ref{thm:fib.main} and \ref{thm:opfib.main}. These provide sufficient conditions
for a(n) (op)fibration to preserve effective descent morphisms.

Employing our results on Grothendieck (op)fibrations, in
Section~\ref{sect:examples} we work out several examples of (op)fibrations that
preserve effective descent morphisms. These include 

\begin{itemize}[label=--]
  \item
    the codomain opfibration induced by any category \( \cat A \)
    (Subsection~\ref{subsect:cod}),
  \item
    the projection \( \cat A \comma \Phi \to \cat B \) of the scone of a functor
    \( \Phi \colon \cat B \to \cat A \) (Subsection~\ref{subsect:cod}),
  \item 
    the natural forgetful functor from lax comma categories to base category
    (Subsection~\ref{subsect:cokleisli}),
  \item 
    topological functors (Subsection~\ref{subsect:topol}),
  \item 
    co-Kleisli fibrations (Subsection~\ref{subsect:laxcomma}),
  \item 
    the (op-)Grothendieck constructions over the op-indexed category of lax
    comma categories and lax direct images (Subsection~\ref{subsect:laxdirect}).
\end{itemize}

In Section~\ref{sect:future-work}, we talk about future work, extending the 
framework to more general notions of bundles (provided by general indexed categories or (truncated) pseudocosimplicial categories in the spirit of \cite{Luc18a}). We show how this particular problem can be useful to further understand natural settings that arise from two-dimensional category theory.

\subsection*{Acknowledgements:}
The first author acknowledges that this research was supported by the Fields Institute for Research in Mathematical Sciences in 2023.
Both authors acknowledge partial financial support by \textit{Centro de
Matemática da Universidade de Coimbra} (CMUC), funded by the Portuguese
Government through FCT/MCTES, DOI 10.54499/UIDB/00324/2020. 
The second author acknowledges support by Fundação para a Ciência e a Tecnologia
(FCT) under CIDMA Grants UIDB/04106/2020  and UIDP/04106/2020. 

We thank George Janelidze, Manuela Sobral and Walter Tholen for their many fruitful discussions on Grothendieck descent theory. We extend special thanks to Matthijs Vákár for his insightful discussions on Grothendieck constructions and \textit{sconing}/Artin gluing, which have significantly influenced this project. We are especially grateful to Maria Manuel Clementino, whose extensive contributions -- including in-depth discussions and providing the original guiding example -- have been invaluable in shaping this work.

\section{Preliminaries}
  \label{sect:preliminaries}
  We review the fundamental notions on descent theory and (op)-Grothendieck
constructions. In particular, we rework some classical results of descent
theory to encompass settings where our category of interest is not guaranteed
to have all pullbacks, and we provide (well-known) explicit descriptions of
pullbacks in the underlying total categories of both Grothendieck and
op-Grothendieck constructions~\cite{Gra74, LV24}.
Additionally, we recall some basic properties of categories with a strict
initial object needed for our work on examples.

\subsection*{Descent theory:}

Let \( \cat A \) be a category, and \( p \colon a \to b \) be a morphism in \(
\cat A \). The direct image functor
\begin{align*}
  p_! \colon \cat A \comma a &\to \cat A \comma b \\
  f &\mapsto p \circ f
\end{align*}
has a right adjoint \( p^* \) if, and only if, \( \cat A \) has pullbacks along
\( p \). When this is case, we write \( \Desc(p) \) for the category of algebras
of the monad induced by \( p_! \adj p^* \).

We say that \( p \) is an \textit{effective descent} morphism (respectively,
\textit{descent} morphism) precisely when \( p^* \) exists and is monadic
(respectively, premonadic).

Theorem \ref{thm:obs} studies conditions for a functor to reflect effective
descent morphisms. Its traditional form assumes a stronger hypothesis, requiring
existence and preservation of all pullbacks, and is the main technique for
studying effective descent morphisms -- see, for instance, \cite{RT94, JST04,
Luc18a, PL23, CJ24}. 

\begin{theorem}
  \label{thm:obs}
  Let \( p \colon a \to b \) be a morphism in \( \cat A \) such that \( \cat A
  \) has pullbacks along \(p \), and let \( L \colon \cat A \to \cat C \) be a
  fully faithful functor that preserves pullbacks along \(p\). If \( L(p) \) is
  an effective descent morphism, then the following are equivalent:
  \begin{enumerate}[label=(\alph*)]
    \item
      \(p\) is an effective descent morphism,
    \item
      \label{enum:obs}
      for all pullback diagrams of the form \eqref{eq:obs},
      \begin{equation}
        \label{eq:obs}
        \begin{tikzcd}
          L(c) \ar[r] \ar[rd,"\ulcorner"{rotate=180,very near start}, phantom]
            \ar[d]
            & x \ar[d,"f"] \\
          L(a) \ar[r,"L(p)",swap] & L(b)
        \end{tikzcd}
      \end{equation}
      there exists \(d\) in \( \cat A\) such that \( x \iso L(d) \).
  \end{enumerate}
\end{theorem}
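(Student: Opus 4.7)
The plan is to introduce the functors \( L_a \colon \cat A \comma a \to \cat C \comma L(a) \) and \( L_b \colon \cat A \comma b \to \cat C \comma L(b) \) induced by postcomposition with \( L \), both fully faithful since \( L \) is, and to exploit the commutative square (up to a canonical natural isomorphism arising from preservation of pullbacks along \( p \))
\[
\begin{tikzcd}
\cat A \comma b \ar[r, "p^*"] \ar[d, "L_b"'] & \cat A \comma a \ar[d, "L_a"] \\
\cat C \comma L(b) \ar[r, "L(p)^*"'] & \cat C \comma L(a).
\end{tikzcd}
\]
Since \( L_b \circ p_! = L(p)_! \circ L_a \) holds strictly (direct images are just post-composition), \( L_a \) lifts to a fully faithful functor \( \widetilde{L_a} \colon T^p\dash\Alg \to T^{L(p)}\dash\Alg \) fitting in a commutative square with the comparison functors \( \mathcal K_p \) and \( \mathcal K_{L(p)} \); the latter is an equivalence by hypothesis.

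For \((a) \Rightarrow (b)\), take \( f \colon x \to L(b) \) whose pullback along \( L(p) \) is isomorphic to \( L_a(g) \) for some \( g \colon c \to a \). Effective descent of \( L(p) \) endows \( L_a(g) \) with a \( T^{L(p)} \)-algebra structure corresponding to \( f \). The isomorphism \( T^{L(p)} L_a \iso L_a T^p \), together with full-faithfulness of \( L_a \), descends this to a unique \( T^p \)-algebra structure on \( g \) (the algebra axioms are reflected by \( L_a \)). Since \( p \) is of effective descent, this algebra corresponds to some \( h \colon d \to b \) in \( \cat A \) with \( p^*(h) \iso g \) as algebras; chasing through the commutative square and using that \( L(p) \) is in particular premonadic then gives \( f \iso L(h) \) in \( \cat C \comma L(b) \), hence \( x \iso L(d) \).

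For \((b) \Rightarrow (a)\), observe that in the commutative square of comparison functors, \( L_b \), \( \widetilde{L_a} \) and \( \mathcal K_{L(p)} \) are fully faithful, so \( \mathcal K_p \) is fully faithful by the usual two-out-of-three style argument (if a composite \( GF \) and \( G \) are fully faithful, so is \( F \)). For essential surjectivity, given \( (g, \beta) \in T^p\dash\Alg \), applying a quasi-inverse of \( \mathcal K_{L(p)} \) to \( \widetilde{L_a}(g, \beta) \) produces \( f \colon x \to L(b) \) with \( L(p)^*(f) \iso L_a(g) \); hypothesis \((b)\) then yields \( d \in \cat A \) with \( x \iso L(d) \), and full-faithfulness of \( L \) lets us rewrite \( f \) as \( L(h) \) for some \( h \colon d \to b \). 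Tracking the resulting isomorphism \( \widetilde{L_a}(\mathcal K_p(h)) \iso \widetilde{L_a}(g, \beta) \) back across the fully faithful \( \widetilde{L_a} \) gives \( \mathcal K_p(h) \iso (g, \beta) \). The main subtlety is bookkeeping: one must ensure that the various isomorphisms are tracked as isomorphisms of algebras rather than of underlying objects, so that the preimage produced in \( \cat A \comma b \) genuinely matches \( (g, \beta) \) as a \( T^p \)-algebra; once the lift \( \widetilde{L_a} \) and the commuting square of comparison functors are set up cleanly, the rest is formal.
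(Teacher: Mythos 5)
Your proof is correct, but it takes a genuinely different route from the paper. The paper proves Theorem~\ref{thm:obs} by citing \cite[Theorem~9.8]{Luc18a}: it sets up the square \eqref{eq:galois} of change-of-base functors, notes that the vertical functors \( L \comma a \), \( L \comma b \) are fully faithful, and then invokes that general result to reduce the equivalence to the statement that \eqref{eq:galois} is a pseudopullback, which is immediately seen to be condition \ref{enum:obs}. You instead unwind the monadicity definition directly: you lift \( L \comma a \) to a fully faithful functor \( \widetilde{L_a} \) between the Eilenberg--Moore categories \( T^p\dash\Alg \) and \( T^{L(p)}\dash\Alg \) (using the strict commutation of the direct-image square and the mate isomorphism \( L_a p^* \iso L(p)^* L_b \) coming from preservation of pullbacks along \(p\)), and then chase objects and algebra structures through the resulting square of comparison functors. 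Both arguments hinge on the same square \eqref{eq:galois}; what yours buys is a self-contained, elementary proof that does not appeal to the external pseudopullback theorem, at the cost of having to verify the standard but nontrivial compatibility of the monad isomorphism \( T^{L(p)} L_a \iso L_a T^p \) with units and multiplications (which you correctly flag as bookkeeping, and which does work out because the comparison isomorphism is the canonical mate). The paper's citation-based proof is shorter and places the result inside the bilimit framework of \cite{Luc18a}, which is what allows the authors to claim the generalizations to other notions of bundles mentioned in the introduction and epilogue; your argument, by contrast, is tied to the codomain-fibration setting where \( \Desc(p) = T^p\dash\Alg \), but within that setting it is complete.
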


\begin{proof}
  This is an immediate consequence of \cite[Theorem 9.8]{Luc18a}; our
  hypotheses state that the change-of-base functors
  \begin{equation*}
    p^* \colon \cat A \comma b \to \cat A \comma a
    \qquad
    \big(L(p)\big)^* \colon \cat C \comma L(b) \to \cat C \comma L(a)
  \end{equation*}
  exist, and that we have an invertible 2-cell
  \begin{equation}
    \label{eq:galois}
    \begin{tikzcd}
      \cat A \comma b 
        \ar[rd,phantom,"\iso"{anchor=center}]
        \ar[r,"p^*"]
        \ar[d,"L \comma b",swap]
      & \cat A \comma a \ar[d,"L \comma a"] \\
      \cat C \comma L(b) \ar[r,"(L(p))^*",swap]
      & \cat C \comma L(a)
    \end{tikzcd}
  \end{equation}
  since \( L \) preserves pullbacks along \( p \).  Moreover, we note that the
  induced functors \( L \comma b \), \( L \comma a \) are both fully faithful.

  Thus, if \( L(p) \) is an effective descent morphism, then Theorem~9.8
  \textit{ibid} confirms that \( p \) is an effective descent morphism if and
  only if \eqref{eq:galois} is a pseudopullback diagram. This is the case if and only if \ref{enum:obs} holds.
\end{proof}

It should be noted that, under the hypothesis established above, Theorem
\ref{thm:obs} characterizes the class of morphisms for which the property
\begin{center}
    $L(p)$ effective descent morphism \( \implies \) $p$ effective descent morphism.
\end{center} 
Motivated by this result, we introduce the following.

\begin{definition}
  Let $L: \cat A \to \cat C $ be a functor. We assume that
  $\Cla$ is a class of morphisms in $\cat A$. We say that:
  
  \begin{itemize}[label=--] 
    \item 
      $L$ \textit{reflects effective descent $\Cla$-morphisms} if
      \begin{center}
        $L(p)$ effective descent morphism \( \implies \) $p$ effective descent morphism,
      \end{center} 
      provided that  \( p \) in \( \Cla \);
    \item
      $L$ \textit{preserves effective descent $\Cla$-morphisms} if
      \begin{center}
        $p$ effective descent morphism \( \implies \) $L(p)$ effective descent morphism,
      \end{center} 
      whenever \( p \) in \( \Cla \).
  \end{itemize}
  In both definitions above, whenever we do not mention the classes, we mean the class of all morphisms in their respective categories.
\end{definition}
Following the terminology established above, by Theorem \ref{thm:obs} we can get the following:

\begin{corollary}
Let $L: \cat A \to \cat C$ be a fully faithful functor.
We consider the following conditions on a class of morphisms of $\cat A$:
  \begin{enumerate}[label=(\alph*)]
    \item 
      \label{enum:pb.preserve} 
      for any morphism $p$ in $\Cla $, the category $\cat A $ has all pullbacks along $p$, and $L$ preserves them;
    \item \label{enum:obs-again}
      for any morphism $p$ in $\Cla $, $L(p)$ satisfies \ref{enum:obs} of Theorem \ref{thm:obs};
   \item  \label{enum:reflect.effect.descent} $L$ reflects effective descent 
  $\Cla$-morphisms.      
  \end{enumerate}
Let  $\Clc $ be the class of morphisms in $\cat A$ satisfying   \ref{enum:pb.preserve} and \ref{enum:obs-again}. We conclude that $\Clc $ satisfies \ref{enum:reflect.effect.descent} and, furthermore, $\Clc $
 is maximal w.r.t. satisfying the properties \ref{enum:pb.preserve} and \ref{enum:reflect.effect.descent}.
\end{corollary}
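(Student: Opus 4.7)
The corollary breaks naturally into two parts, both of which I would handle by appealing to Theorem~\ref{thm:obs} (and, under the hood, Theorem~9.8 of \cite{Luc18a}).

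For the first assertion, that $\Clc$ satisfies \ref{enum:reflect.effect.descent}, I would simply unwind definitions: for any $p \in \Clc$, both \ref{enum:pb.preserve} and \ref{enum:obs-again} hold, so whenever $L(p)$ is an effective descent morphism, Theorem~\ref{thm:obs} applies---with \ref{enum:obs-again} playing the role of its option~\ref{enum:obs}---yielding that $p$ is itself of effective descent. This confirms that $L$ reflects effective descent $\Clc$-morphisms.

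For maximality, I would argue that any class $\Clc'$ satisfying \ref{enum:pb.preserve} and \ref{enum:reflect.effect.descent} is contained in $\Clc$. Fix $p \in \Clc'$; by \ref{enum:pb.preserve}, the square \eqref{eq:galois} is well-defined and commutes up to an invertible 2-cell, so it remains to verify \ref{enum:obs-again}. The key structural observation, extracted from the proof of Theorem~\ref{thm:obs}, is that since $L$ is fully faithful, so are the induced comma functors $L \comma a$ and $L \comma b$; a direct analysis of the pseudopullback of $L \comma b$ along $\big(L(p)\big)^*$ then shows that \eqref{eq:galois} is a pseudopullback if and only if \ref{enum:obs-again} holds for $p$, and crucially this equivalence does \emph{not} depend on $L(p)$ being effective descent. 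With this unconditional equivalence in hand, Theorem~9.8 of \cite{Luc18a} (whose hypotheses are available thanks to \ref{enum:pb.preserve}) relates reflection of effective descent at $p$ to \eqref{eq:galois} being a pseudopullback, so \ref{enum:reflect.effect.descent} applied at $p$ forces \ref{enum:obs-again}, giving $p \in \Clc$.

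The main obstacle is isolating the unconditional equivalence between \eqref{eq:galois} being a pseudopullback and \ref{enum:obs-again} from the conditional biconditional of Theorem~\ref{thm:obs}; concretely, one must verify that the full faithfulness of $L$ alone suffices for the pseudopullback reformulation of \ref{enum:obs-again}, without assuming $L(p)$ is effective descent. Everything else is bookkeeping with the essential-image description of pseudopullbacks along fully faithful functors.
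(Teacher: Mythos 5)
Your argument for the first assertion is correct and is exactly what the paper intends: the paper in fact supplies no proof of this corollary, treating it as immediate from Theorem~\ref{thm:obs}, and your unwinding of definitions (for $p$ in $\Clc$ with $L(p)$ of effective descent, conditions \ref{enum:pb.preserve} and \ref{enum:obs-again} place you squarely in the hypotheses of Theorem~\ref{thm:obs}, whose item \ref{enum:obs} then yields that $p$ is of effective descent) is the right one.

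The maximality half, however, has a genuine gap. You correctly isolate the unconditional equivalence between ``\eqref{eq:galois} is a pseudopullback'' and ``\ref{enum:obs-again} holds at $p$'' (this is indeed just the essential-image description of pseudopullbacks along fully faithful functors and needs no descent hypothesis), but the next step --- that \ref{enum:reflect.effect.descent} applied at $p$ forces \ref{enum:obs-again} --- does not go through. Reflection at $p$ is the implication ``$L(p)$ of effective descent $\implies$ $p$ of effective descent,'' which is vacuously satisfied whenever $L(p)$ fails to be of effective descent; a vacuously true implication cannot force \eqref{eq:galois} to be a pseudopullback, and Theorem~9.8 of \cite{Luc18a} only ties the pseudopullback property to the effective descent of $p$ under the standing hypothesis that $L(p)$ is of effective descent. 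Concretely, a class $\Clc'$ satisfying \ref{enum:pb.preserve} may contain a morphism $p$ for which $L(p)$ is not of effective descent and \ref{enum:obs-again} fails; such a $\Clc'$ still satisfies \ref{enum:reflect.effect.descent} at $p$ for free, yet $p \notin \Clc$. What your argument does establish is the restricted containment: for any $\Clc'$ satisfying \ref{enum:pb.preserve} and \ref{enum:reflect.effect.descent}, every $p \in \Clc'$ such that $L(p)$ is of effective descent lies in $\Clc$, since then $p$ is of effective descent and Theorem~\ref{thm:obs} delivers \ref{enum:obs-again}. This appears to be the most that can be extracted; the maximality claim as literally stated should be read with this caveat (or with the class restricted to morphisms whose $L$-image is of effective descent), and since the paper omits the proof there is no argument there to check the intended reading against.
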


\begin{theorem}
  \label{thm:obs.r.adj}
  Let \( L \adj U \colon \cat C \to \cat A \) be an adjunction with \( L \)
  fully faithful and counit \( \epsilon \), and let \( p \colon a \to b \) be a
  morphism in \( \cat A \).

  If \( L(p) \) is an effective descent morphism, then \( p \) is an effective
  descent morphism if and only if for all pullback diagrams \eqref{eq:obs}, we
  have \( \epsilon_x \) monomorphic.
\end{theorem}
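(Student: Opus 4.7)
The plan is to apply Theorem~\ref{thm:obs}, which, under the assumption that $L(p)$ is effective descent, characterizes $p$ as effective descent by the condition that $x \iso L(d)$ for some $d \in \cat A$ in every pullback diagram of the form \eqref{eq:obs}. The forward implication is then immediate: $x \iso L(d)$ renders $\epsilon_x$ an isomorphism (because $L$ is fully faithful, so $\eta$ is invertible and $\epsilon_{L(-)}$ is invertible by the triangle identity), and therefore in particular a monomorphism.

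For the converse, assuming $\epsilon_x$ is a monomorphism for every diagram \eqref{eq:obs}, I set $d := U(x)$ and show that $\epsilon_x\colon LU(x) \to x$ is invertible. First, applying $U$ to the given pullback square, and using that $U$ preserves limits and $\eta$ is invertible, exhibits $c$ as the pullback $a \times_b U(x)$ in $\cat A$ along the morphism $\tilde f := \eta_b^{-1} \circ U(f)\colon U(x) \to b$; naturality of $\epsilon$ together with the triangle identity yields $f \circ \epsilon_x = L(\tilde f)$, so $\epsilon_x$ becomes a morphism in $\cat C \comma L(b)$ from $(LU(x), L(\tilde f))$ to $(x, f)$. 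Next, apply the change-of-base functor $L(p)^*\colon \cat C \comma L(b) \to \cat C \comma L(a)$, which exists and is conservative since $L(p)$ is effective descent (i.e., $L(p)^*$ is monadic). By the pullback pasting lemma, $L(p)^*(\epsilon_x)$ is identified with the projection $\pi\colon L(c) \times_x LU(x) \to L(c)$, where the pullback is taken along $g_1\colon L(c) \to x$ (a pullback projection from \eqref{eq:obs}) and $\epsilon_x\colon LU(x) \to x$.

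To close the argument, one observes that $\pi$ is a monomorphism as a pullback of the monomorphism $\epsilon_x$, and is also a split epimorphism: the adjoint transpose $\tilde g_1\colon c \to U(x)$ of $g_1$ satisfies $g_1 = \epsilon_x \circ L(\tilde g_1)$, so the pair $(\id_{L(c)}, L(\tilde g_1))$ induces, by the universal property of the pullback, a section of $\pi$. Any morphism that is both a monomorphism and a split epimorphism is invertible, so $L(p)^*(\epsilon_x)$ is an isomorphism; by conservativity of $L(p)^*$, so is $\epsilon_x$, giving $x \iso LU(x) = L(U(x))$ as required by Theorem~\ref{thm:obs}. The main technical obstacle is the pasting identification of $L(p)^*(\epsilon_x)$ with $\pi$ and the verification that $f \circ \epsilon_x = L(\tilde f)$; once these are secured, the monic-plus-split-epi step closes the proof routinely.
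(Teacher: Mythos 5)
Your proof is correct, and your forward direction coincides with the paper's (both reduce to Theorem~\ref{thm:obs} together with the invertibility of \( \epsilon \) on the essential image of the fully faithful \( L \)). For the converse, however, you take a genuinely different route. The paper argues that, since effective descent morphisms are stable under pullback and are in particular extremal epimorphisms, the top projection \( g_1 \colon L(c) \to x \) of \eqref{eq:obs} is an extremal epimorphism; it factors as \( \epsilon_x \circ L(\tilde g_1) \) through its adjoint transpose, so if \( \epsilon_x \) is a monomorphism it is forced to be invertible, and Theorem~\ref{thm:obs} applies. You instead use the monadicity of \( \big(L(p)\big)^* \) only through its conservativity: after checking \( f \circ \epsilon_x = L(\tilde f) \) so that \( \epsilon_x \) is a morphism of \( \cat C \comma L(b) \), you identify \( \big(L(p)\big)^*(\epsilon_x) \) via pullback pasting with the projection \( L(c) \times_x LU(x) \to L(c) \), which is a monomorphism (being a pullback of \( \epsilon_x \)) and a split epimorphism (the section induced by \( \bigl(\id_{L(c)}, L(\tilde g_1)\bigr) \)), hence invertible; conservativity then yields that \( \epsilon_x \) is invertible. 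Both arguments pivot on the same transposition identity \( g_1 = \epsilon_x \circ L(\tilde g_1) \), but the paper's is shorter because it imports pullback-stability and the epimorphic character of effective descent morphisms as black boxes, whereas yours is more self-contained, needing only that monadic functors reflect isomorphisms plus elementary pullback manipulations. One shared caveat: both proofs conclude by invoking Theorem~\ref{thm:obs}, whose hypotheses also require that \( L \) preserve pullbacks along \( p \); like the paper, you establish that \( \cat A \) has the relevant pullbacks (by applying \( U \) and using \( UL \iso \id \)) but do not explicitly verify preservation, so you inherit exactly the same implicit step rather than introducing a new gap.
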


\begin{proof}
  For clarity, we recall that, under our hypothesis, if $\cat C$ has pullbacks along  \( L(p) \), then \( \cat A \) has pullbacks along $p$. Indeed, if we
  have a morphism \( f \colon c \to b \) in \( \cat A \), then we consider the
  following pullback diagram:
  \begin{equation}
    \label{eq:to.reflect}
    \begin{tikzcd}
      x \ar[r] \ar[d] 
        \ar[rd,"\ulcorner"{rotate=180},phantom,very near start]
        & L(c) \ar[d,"L(f)"] \\
      L(a) \ar[r,"L(p)",swap]
        & L(b)
    \end{tikzcd}
  \end{equation}
  By composing \( U \) with Diagram \eqref{eq:to.reflect}, and noting that \( UL
  \iso \id \), we conclude that \( \cat A \) has the pullback of \(f\) along \(
  p \).
  
  Since effective descent morphisms are stable under pullback, the induced
  morphism \( L(c) \to a \) in Diagram~\eqref{eq:obs} is effective for descent
  (and is, in particular, an extremal epimorphism). This morphism factors as
  \( \epsilon_x \circ L(q) \) for a suitable \( q \), hence, if \( \epsilon_x
  \) is a monomorphism, it must be an isomorphism. This implies \( LU(x) \iso
  x \), and Theorem \ref{thm:obs} may be applied.
\end{proof}

\subsection*{Grothendieck construction:}
Since it is a two-dimensional notion, the \textit{Grothendieck construction} over an indexed category (which strictly implies that our domain is a one-dimensional category)
has four duals (see, for instance, \cite{Lac10, Luc22, LucTh}), which 
we describe below. 

For clarity, we fix two pseudofunctors
\begin{equation*}
  \Ff \colon \cat A^\op \to \CAT,
  \qquad
  \Hh \colon \cat A \to \CAT.
\end{equation*}
We recall that the \textit{Grothendieck construction} of \( \Ff \) is a
fibration \( \int_{\cat A} \Ff \to \cat A \), whose \textit{total category} \(
\int_{\cat A} \Ff \) has set of objects \( \sum_{a \in \ob \cat A} \ob \Ff_a
\), and hom-sets 
\begin{equation}
  \label{eq:gc.homsets}
  \int_{\cat A} \Ff \big( (a,x), (b,y) \big)
    = \sum_{f \in \cat A(a,b)} \Ff_a \big( x, \Ff_f(y)\big).
\end{equation}

Dually, herein, the \textit{op-Grothendieck construction}\footnote{In the literature, op-Grothendieck construction is usually the codual of what we define herein -- see, for instance, \cite{LV23, LV24, Gra74, Joh02}.} of \( \Hh \) is an
opfibration \( \int^{\cat A} \Hh \to \cat A \), whose \textit{total category}
\( \int^{\cat A} \Hh \) has set of objects \( \sum_{a \in \ob \cat A} \ob
\Hh_a \), and hom-sets
\begin{equation}
  \label{eq:opgc.homsets}
  \int^{\cat A} \Hh \big( (a,x), (b,y) \big)
    = \sum_{f \in \cat A(a,b)} \Hh_a \big( \Hh_f(x), y\big).
\end{equation}

The total categories of the codual of the op-Grothendieck and Grothendieck constructions presented above are, respectively, given by:
\begin{center} 
  \( \displaystyle\left( \int^{\cat A} \Hh \right) ^\op  \)
  and 
  \( \displaystyle\left( \int_{\cat A} \Ff \right) ^\op  \) .
\end{center} 

\begin{remark}
  \label{rem:gc}
  If \( \Ff_a = \Hh_a \) for all objects \( a \) in \( \cat A \) and \( \Hh_f
  \adj \Ff_f \), then it is immediate that the Grothendieck construction of \(
  \Ff \) is isomorphic to the op-Grothendieck construction of \( \Hh \) -- so
  that \( \int_{\cat A} \Ff \to \cat A \) is a bifibration.

  Moreover, the op-Grothendieck construction of \( \Hh \) can be obtained from
  the Grothendieck construction, via
  \begin{equation*}
    \int^{\cat A} \Hh 
      = \Big( \int_{\cat A^\op} (-)^\op \circ \Hh^\co \Big)^\op,
  \end{equation*}
  where \( (-)^\op \colon \CAT^\co \to \CAT \) is the dualization 2-functor,
  and \( \Hh^\co \colon \cat A \to \CAT^\co \) is the codual of \( \Hh \).

\end{remark}

The following results characterizing pullbacks in \( \int_{\cat A} \Ff \) and
\( \int^{\cat A} \Hh \) are special cases of familiar characterizations for
limits in (op-)Grothedieck constructions, appearing in the literature at least
since \cite{Gra66}. See, for instance, \cite[Lemma 1, Lemma 2]{LV24} for the
statements and proofs.

\begin{lemma}
  \label{lem:gc.pb.criteria}
  Let \( \Ff \colon \cat A^\op \to \CAT \) be a pseudofunctor. If we have a
  commutative square 
  \begin{equation}
    \label{eq:comm.sq}
    \begin{tikzcd}
      (a,w) \ar[r,"{(p,\pi)}"] 
            \ar[d,"{(q,\chi)}",swap]
      & (b,x) \ar[d,"{(f,\phi)}"] \\
      (c,y) \ar[r,"{(g,\psi)}",swap]
      & (d,z)
    \end{tikzcd}
  \end{equation}
  in \( \int_{\cat A} \Ff \) such that
  \begin{equation*}
    \begin{tikzcd}
      a \ar[r,"p"] 
            \ar[d,"q",swap]
      & b \ar[d,"f"] \\
      c \ar[r,"g",swap]
      & d
    \end{tikzcd}
  \end{equation*}
  is a pullback diagram, then the following are equivalent:
  \begin{enumerate}[label=(\alph*)]
    \item
      Diagram \eqref{eq:comm.sq} is a pullback diagram.
    \item
      For every morphism \( k \) in \( \cat A \comma a \), the following
      diagram
      \begin{equation*}
        \begin{tikzcd}
          \Ff_k(w) \ar[r,"\Ff_k(\pi)"] 
            \ar[d,"\Ff_k(\chi)",swap]
          & \Ff_k\Ff_p(x) \ar[d,"\Ff_k\Ff_p(\phi)"] \\
          \Ff_k\Ff_q(y) \ar[r,"\Ff_k\Ff_q(\psi)",swap]
          & \Ff_k\Ff_q\Ff_g(z) \iso \Ff_k\Ff_p\Ff_f(z)
        \end{tikzcd}
      \end{equation*}
      is a pullback square.
  \end{enumerate}
\end{lemma}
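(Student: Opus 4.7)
The plan is to directly unpack the universal property of a pullback in $\int_{\cat A}\Ff$ in terms of cones with an arbitrary apex $(e,u)$, and to use the pullback property of the underlying square in $\cat A$ to separate the cone data into a ``base'' component in $\cat A$ and a ``fiber'' component in $\Ff_e$. Recall that a morphism $(e,u)\to(b,x)$ in $\int_{\cat A}\Ff$ is a pair $(m,\mu)$ with $m\colon e\to b$ in $\cat A$ and $\mu\colon u\to\Ff_m(x)$ in $\Ff_e$. A cone over the cospan $(f,\phi),(g,\psi)$ with apex $(e,u)$ is therefore a pair $(m,\mu),(n,\nu)$ with $f\circ m=g\circ n$ satisfying the cocycle condition $\Ff_m(\phi)\circ\mu=\Ff_n(\psi)\circ\nu$, up to the pseudofunctoriality comparison $\Ff_m\Ff_f(z)\iso\Ff_n\Ff_g(z)$. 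Since the underlying diagram in $\cat A$ is a pullback, the first components factor uniquely as $m=p\circ k$ and $n=q\circ k$ for some $k\colon e\to a$; applying the pseudofunctoriality comparisons, the cocycle condition translates precisely into the commutativity of a cone with apex $u$ over the $k$-th fiber diagram displayed in the statement.

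For the implication (b) $\Rightarrow$ (a), I would take an arbitrary cone $(m,\mu),(n,\nu)$ over the cospan, extract the unique $k\colon e\to a$ as above, and obtain the resulting compatible cone in $\Ff_e$ over the $k$-th fiber diagram. By hypothesis this cone factors uniquely through $\Ff_k(w)$, yielding a unique $\rho\colon u\to\Ff_k(w)$ with $\Ff_k(\pi)\circ\rho=\mu$ and $\Ff_k(\chi)\circ\rho=\nu$. The pair $(k,\rho)$ is then the unique morphism in $\int_{\cat A}\Ff$ factoring the given cone through $(a,w)$, showing that \eqref{eq:comm.sq} is a pullback.

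For the converse (a) $\Rightarrow$ (b), fix $k\colon e\to a$ and a cone $\mu\colon u\to\Ff_k\Ff_p(x)$, $\nu\colon u\to\Ff_k\Ff_q(y)$ in $\Ff_e$ over the $k$-th fiber diagram. Reinterpreting these as the morphisms $(p\circ k,\mu)\colon(e,u)\to(b,x)$ and $(q\circ k,\nu)\colon(e,u)\to(c,y)$ in $\int_{\cat A}\Ff$, the equality $f\circ p=g\circ q$ together with the fiber compatibility ensures that they form a cone over the cospan. Applying the pullback property of \eqref{eq:comm.sq} produces a unique $(r,\rho)\colon(e,u)\to(a,w)$ realizing this cone. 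The conditions $p\circ r=p\circ k$ and $q\circ r=q\circ k$ combined with the pullback property of the underlying diagram in $\cat A$ force $r=k$, and $\rho$ is then the unique morphism $u\to\Ff_k(w)$ factoring $(\mu,\nu)$ through the fiber diagram.

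The only real technical nuisance will be keeping track of the canonical coherence isomorphisms arising from the pseudofunctoriality of $\Ff$ -- for example the comparisons $\Ff_{p\circ k}\iso\Ff_k\Ff_p$ and the iterated $\Ff_{f\circ p\circ k}\iso\Ff_k\Ff_p\Ff_f$ used to identify the bottom-right corner of the fiber square. These amount to a routine bookkeeping exercise once the dictionary between cones in $\int_{\cat A}\Ff$ and fibered cones in $\Ff_e$ is set up, and the rest of the argument is a direct translation between the two universal properties.
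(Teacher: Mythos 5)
Your proof is correct. Note that the paper does not actually prove this lemma itself---it defers to Lemmas 1 and 2 of the cited reference \cite{LV24}---and your direct unpacking of the universal property (the unique base factorization $m = p\circ k$, $n = q\circ k$ supplied by the pullback in $\cat A$, followed by the translation of the remaining cone data into a fiberwise cone over the $k$-th square in $\Ff_e$, with the coherence isomorphisms handled as routine bookkeeping) is precisely the standard argument one finds there.
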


\begin{lemma}
  \label{lem:opgc.pb.criteria}
  Let \( \Hh \colon \cat A \to \CAT \) be a pseudofunctor. If we have a
  commutative square
  \begin{equation}
    \label{eq:comm.sq.1}
    \begin{tikzcd}
      (a,w) \ar[r,"{(p,\pi)}"] 
            \ar[d,"{(q,\chi)}",swap]
      & (b,x) \ar[d,"{(f,\phi)}"] \\
      (c,y) \ar[r,"{(g,\psi)}",swap]
      & (d,z)
    \end{tikzcd}
  \end{equation}
  in \( \int^{\cat A} \Hh \) such that
  \begin{equation*}
    \begin{tikzcd}
      a \ar[r,"p"] 
            \ar[d,"q",swap]
      & b \ar[d,"f"] \\
      c \ar[r,"g",swap]
      & d
    \end{tikzcd}
  \end{equation*}
  is a pullback diagram, then the following are equivalent:
  \begin{enumerate}[label=(\eql)]
    \item
      Diagram \eqref{eq:comm.sq.1} is a pullback.
    \item
      For every pair of morphisms \( \sigma \colon \Hh_p(v) \to x \), \( \tau
      \colon \Hh_q(v) \to y \) such that the diagram
      \begin{equation*}
        \begin{tikzcd}
          \Hh_g\Hh_q(v) \iso \Hh_f\Hh_p(v)
                \ar[r,"\Hh_f(\sigma)"] 
                \ar[d,"\Hh_g(\tau)",swap]
          & \Hh_f(x) \ar[d,"\phi"] \\
          \Hh_g(y) \ar[r,"\psi",swap]
          & z
        \end{tikzcd}
      \end{equation*}
      in \( \Hh_d \) commutes, there exists a unique morphism \( \theta \colon
      v \to w \) such that \( \sigma = \pi \circ \Hh_p(\theta) \) and \( \tau
      = \chi \circ \Hh_q(\theta) \).
  \end{enumerate}
\end{lemma}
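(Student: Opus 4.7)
The plan is to reduce the pullback universal property in the total category $\int^{\cat A}\Hh$ to a ``fibered'' universal property for $w$ in $\Hh_a$, exploiting the assumption that the base square is already a pullback. First, I would recall the description of morphisms and composition in $\int^{\cat A}\Hh$: a morphism $(e,v')\to(a,w)$ is a pair $(k,\theta)$ with $k\colon e\to a$ in $\cat A$ and $\theta\colon\Hh_k(v')\to w$ in $\Hh_a$; the composite $(p,\pi)\circ(k,\theta)$ is $(pk,\,\pi\circ\Hh_p(\theta))$, modulo the pseudofunctoriality coherence isomorphism $\Hh_{pk}\iso\Hh_p\Hh_k$.

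Next, I would unpack an arbitrary cone over the cospan $(b,x)\to(d,z)\leftarrow(c,y)$ from a test object $(e,v')$: it consists of morphisms $(p',\sigma')\colon(e,v')\to(b,x)$ and $(q',\tau')\colon(e,v')\to(c,y)$ whose composites into $(d,z)$ agree. The base pullback yields a unique $k\colon e\to a$ with $pk=p'$ and $qk=q'$. Setting $v:=\Hh_k(v')\in\Hh_a$ and transporting $\sigma',\tau'$ along the coherence isomorphisms $\Hh_{p'}(v')\iso\Hh_p(v)$ and $\Hh_{q'}(v')\iso\Hh_q(v)$, one obtains $\sigma\colon\Hh_p(v)\to x$ and $\tau\colon\Hh_q(v)\to y$; the cone compatibility at $(d,z)$ then unfolds as exactly the commuting square in $\Hh_d$ displayed in (b).

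Under these identifications, factoring the cone through $(a,w)$ amounts to specifying a morphism $\theta\colon v\to w$ in $\Hh_a$ satisfying $\pi\circ\Hh_p(\theta)=\sigma$ and $\chi\circ\Hh_q(\theta)=\tau$, and uniqueness of the factorization in $\int^{\cat A}\Hh$ corresponds to uniqueness of $\theta$. Since every $v\in\Hh_a$ with compatible $\sigma,\tau$ arises this way already from the test object $(a,v)$ with $k=\id_a$, condition (b) is not merely necessary but sufficient for the pullback universal property, yielding (a) $\Leftrightarrow$ (b).

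The hard part will be keeping the pseudofunctoriality coherence isomorphisms $\Hh_{pk}\iso\Hh_p\Hh_k$ and $\Hh_{fp}\iso\Hh_{gq}$ correctly aligned throughout the translation between compositions in $\int^{\cat A}\Hh$ and the ensuing diagrams in the fibers $\Hh_a,\Hh_b,\Hh_c,\Hh_d$: the bookkeeping is routine but unavoidable. A slicker alternative I would be tempted to adopt is to cite the general description of limits in op-Grothendieck constructions going back to \cite{Gra66} and recorded as \cite[Lemma~2]{LV24}, of which this is the cospan-shape instance, or to deduce it from Lemma~\ref{lem:gc.pb.criteria} by duality via the identification $\int^{\cat A}\Hh=\bigl(\int_{\cat A^\op}(-)^\op\circ\Hh^\co\bigr)^\op$ of Remark~\ref{rem:gc}.
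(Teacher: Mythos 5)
Your proposal is correct. Note, though, that the paper does not actually prove this lemma: it defers entirely to the literature, stating that it is a special case of the classical description of limits in (op-)Grothendieck constructions going back to \cite{Gra66} and recorded as \cite[Lemmas 1 and 2]{LV24}. So your fallback option (citing \cite[Lemma 2]{LV24}, or dualizing Lemma~\ref{lem:gc.pb.criteria} via Remark~\ref{rem:gc}) is precisely what the paper does, while your primary plan supplies the direct verification the paper omits. That verification is sound: the base pullback pins down the first component $k$ of any factorization, the substitution $v := \Hh_k(v')$ together with the coherence isomorphisms $\Hh_{pk} \iso \Hh_p\Hh_k$ reduces an arbitrary cone to a datum of the form in (b), and testing against objects $(a,v)$ with $k = \id_a$ shows (b) is also necessary -- for that last step one should note that the base pullback forces the first component of the induced factorization $(a,v) \to (a,w)$ to be $\id_a$. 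The coherence bookkeeping you flag is indeed the only real content of the argument, and is routine. The trade-off is the usual one: your direct proof is self-contained and makes visible exactly where pseudofunctoriality enters, whereas the citation keeps the preliminaries short at the cost of sending the reader elsewhere; either is acceptable here.
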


\subsection*{Strict initial objects:}

Let \( \cat A \) be a category with an initial object \( \init \). We say that
\( \init \) is \textit{strict} if \( x \iso \init \) whenever \( \cat
A(x,\init) \) is non-empty. We note the following:

\begin{lemma}
  If \( \cat A \) has binary products of every object with \( \init \), then
  the following are equivalent:
  \begin{enumerate}[label=(\alph*)]
    \item
      \label{enum:strict.init}
      \( \init \) is a strict initial object.
    \item
      \label{enum:times.zero}
      \( a \times \init \iso \init \) for all \( a \). 
  \end{enumerate}
\end{lemma}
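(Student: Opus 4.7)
The plan is to prove both implications directly, using the projection and pairing maps of the product $a \times \init$.

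For (a) $\Rightarrow$ (b), I start with an arbitrary object $a$ and consider the second projection $\pi_2 \colon a \times \init \to \init$, which exists by hypothesis. This is a morphism into $\init$, so strictness of $\init$ immediately yields $a \times \init \iso \init$.

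For the converse (b) $\Rightarrow$ (a), I take any morphism $f \colon x \to \init$ and aim to show $x \iso \init$. The key construction is the pair $\langle \id_x, f \rangle \colon x \to x \times \init$, which composes with the first projection $\pi_1 \colon x \times \init \to x$ to give $\id_x$. This exhibits $x$ as a retract of $x \times \init$. By hypothesis (b), $x \times \init \iso \init$, so $x$ is a retract of the initial object.

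The last step -- a retract of an initial object is isomorphic to it -- is the only nontrivial point, but it is immediate: if $r \circ s = \id_x$ with $s \colon x \to \init$ and $r \colon \init \to x$, then $s \circ r \colon \init \to \init$ must be $\id_\init$ by uniqueness of morphisms out of the initial object, making $s$ a two-sided inverse of $r$. I do not expect any real obstacle; the only care needed is to make sure both projections of $x \times \init$ are used correctly (the product exists by the standing hypothesis that binary products with $\init$ exist).
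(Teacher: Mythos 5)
Your proof is correct, and the two directions split as follows. For (a) $\Rightarrow$ (b) you argue exactly as the paper does: the projection $a \times \init \to \init$ together with strictness of $\init$ gives the isomorphism. For (b) $\Rightarrow$ (a) you take a genuinely different route. The paper works with hom-sets: from $\cat A(x, a \times \init) \iso \cat A(x,a) \times \cat A(x,\init)$ and hypothesis (b), it first deduces that $\cat A(x,\init) \iso \term$ whenever this set is non-empty, and then that $\cat A(x,a) \iso \term$ for every $a$, so that $x$ is initial and hence $x \iso \init$. You instead exhibit $x$ as a retract of $x \times \init \iso \init$ via $\pi_1 \circ \langle \id_x, f\rangle = \id_x$, and finish with the observation that a retract of an initial object is isomorphic to it. Both arguments are short and elementary, but yours is slightly more robust: the paper's step ``$\cat A(x,\init) \times \cat A(x,\init) \iso \cat A(x,\init)$ implies $\cat A(x,\init) \iso \term$'' is only valid because the bijection in question is (induced by) a product projection -- an abstract bijection $S \times S \iso S$ does not force $S$ to be a singleton when $S$ is infinite -- whereas your retraction argument never needs to inspect which bijection is at play. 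Your one nontrivial auxiliary fact, that a retract of an initial object is an isomorphism, is justified exactly as you say, by uniqueness of the endomorphism of $\init$.
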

\begin{proof}
  The projection \( a \times \init \to \init \) witnesses that
  \ref{enum:strict.init} \( \implies \) \ref{enum:times.zero}.

  Conversely, we assume that \ref{enum:times.zero} holds, so that \( \cat
  A(x,a) \times \cat A(x,\init) \iso \cat A(x,\init) \). If \( x \) is an
  object in \( \cat A \) such that \( \cat A(x,\init) \) is non-empty, then
  \begin{equation*}
    \cat A(x,\init) \times \cat A(x,\init) \iso \cat A(x,\init)
  \end{equation*}
  implies that \( \cat A(x,\init) \iso \term \). Hence, we must have
  \begin{equation*}
    \cat A(x,a) \iso \cat A(x,a) \times \cat A(x,\init) \iso \cat A(x,\init)
    \iso \term
  \end{equation*}
  for all \(a\) -- which entails that \( x \iso \init \), confirming
  \ref{enum:strict.init}.
\end{proof}

\begin{lemma}
  \label{lem:strict.init.pb}
  If \( \cat A \) has a strict initial object \( \init \), then
  the following commutative square 
  \begin{equation}
    \label{eq:empty.int}
    \begin{tikzcd}
      \init \ar[r] \ar[d] & \init \ar[d] \\
      a \ar[r,"p",swap] & b
    \end{tikzcd}
  \end{equation}
  is a pullback diagram for every morphism \( p \colon a \to b \) in \( \cat A
  \).
\end{lemma}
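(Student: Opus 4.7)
The plan is to verify the universal property of the pullback directly, using strictness to collapse any competing cone onto \( \init \). First, note that the top morphism \( \init \to \init \) must be the identity by initiality of \( \init \), so there is no ambiguity in the square under consideration.

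Next, I would take an arbitrary cone: an object \( x \) equipped with morphisms \( u \colon x \to \init \) and \( v \colon x \to a \) such that \( p \circ v \) equals the composite \( x \xrightarrow{u} \init \to b \). The very existence of the morphism \( u \colon x \to \init \) activates the strictness hypothesis, forcing \( x \iso \init \). Consequently, the hom-set \( \cat A(x, \init) \) is isomorphic to \( \cat A(\init, \init) \iso \term \), so \( u \) is the unique candidate for the mediating morphism \( x \to \init \).

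It remains to check that \( u \) makes both triangles commute. The triangle involving the top edge is automatic since that edge is the identity. For the other triangle, I must show that \( v \) coincides with the composite \( x \xrightarrow{u} \init \to a \); but both are morphisms out of \( x \iso \init \) into \( a \), so they agree by the initiality of \( \init \), which gives \( \cat A(\init, a) \iso \term \).

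The proof is essentially routine; the only subtlety is keeping the initiality arguments straight. The key conceptual step is recognizing that strictness is exactly the device that upgrades the mere existence of \( u \colon x \to \init \) to an isomorphism \( x \iso \init \), after which all universal-property verifications collapse to single-element hom-sets. No commutativity hypothesis on the outer cone is actually needed beyond what is already implied by initiality, since every diagram with codomain \( \init \) whose vertices include \( x \iso \init \) commutes trivially.
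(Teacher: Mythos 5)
Your proof is correct and follows the same route as the paper's: the paper's argument is precisely that any competing cone vertex \( w \) admitting a morphism to \( \init \) must satisfy \( w \iso \init \) by strictness, after which the universal property is immediate by initiality. You have simply spelled out the routine verifications that the paper leaves implicit.
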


\begin{proof}
  If we have a commutative square
  \begin{equation*}
    \begin{tikzcd}
      w \ar[r] \ar[d] & \init \ar[d] \\
      a \ar[r,"p",swap] & b
    \end{tikzcd}
  \end{equation*}
  we necessarily have \( w \iso \init \), so there is nothing to be done.
\end{proof}

\section{Guiding principle}
  \label{sect:descent}
  In Section \ref{sect:preservation}, we present the foundational observations that led us to develop more structured results. Our guiding principle is that any functor preserving effective descent morphisms induces another functor with the same property, provided it factors through a functor that reflects effective descent morphisms. Given the availability of several techniques in the literature for obtaining functors that reflect morphisms (see, for instance, \cite{RT94, JST04, Luc18a}), we consider this principle to be fruitful. We begin by stating a basic naive general result, followed by more structured versions derived through our comonadic approach.

\begin{lemma}
  \label{lem:guiding}
  Let \eqref{eq:provocar-o-Rui} be a diagram of functors, and let $\Cla$
be a class of morphisms in $\cat A$.  
  \begin{equation}\label{eq:provocar-o-Rui}
    \begin{tikzcd}
      \cat A \ar[r,"U"] 
         & \cat B \ar[r,"L"]
         & \cat C
    \end{tikzcd}
  \end{equation}
We consider the class $U\left( \Cla\right) = \Clc $  of morphisms in the image of $\Cla$ by $U$. In this setting, if 
\begin{itemize} 
\item \( LU \) preserves effective descent $\Cla$-morphisms,
\item and \( L \) reflects effective descent $\Clc$-morphisms,
\end{itemize} 
then \( U \) preserves effective descent $\Cla$-morphisms.
\end{lemma}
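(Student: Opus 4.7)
The statement is a direct formal consequence of the two hypotheses combined with the definitions of preservation and reflection along a given class; the strategy is simply to chase an arbitrary effective descent $\Cla$-morphism through the factorization $LU$.

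More concretely, I would begin by fixing an effective descent morphism $p$ in the class $\Cla$, with the goal of showing that $U(p)$ is an effective descent morphism in $\cat B$. By the first hypothesis, namely that the composite $LU$ preserves effective descent $\Cla$-morphisms, we obtain that $LU(p) = L(U(p))$ is an effective descent morphism in $\cat C$. Next, I would invoke the definition of $\Clc$: since $p \in \Cla$, by construction $U(p) \in U(\Cla) = \Clc$, so $U(p)$ is a morphism to which the reflection hypothesis on $L$ applies. Applying the second hypothesis, that $L$ reflects effective descent $\Clc$-morphisms, to the morphism $U(p) \in \Clc$ whose image $L(U(p))$ is effective descent, we conclude that $U(p)$ is itself an effective descent morphism in $\cat B$, as desired.

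There is essentially no obstacle here: the proof is a bookkeeping exercise verifying that the class $\Clc = U(\Cla)$ has been defined precisely so that the reflection hypothesis on $L$ can be triggered by any morphism of the form $U(p)$ with $p \in \Cla$. The only point that requires minor care is making sure that the definitions of \emph{preserves effective descent $\Cla$-morphisms} and \emph{reflects effective descent $\Clc$-morphisms} are being applied to morphisms that belong to the correct classes; this is immediate from the definition $\Clc = U(\Cla)$. The result should therefore be stated and proved in a few lines, serving as the naive baseline that the subsequent, more refined comonadic versions of the guiding principle will strengthen.
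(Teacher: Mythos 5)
Your proof is correct and is exactly the intended argument: the paper in fact omits a proof of this lemma entirely, treating it as immediate from the definitions, and your two-step chase (preservation of $LU$ gives $LU(p)$ effective descent, then reflection of $L$ applied to $U(p)\in\Clc$ gives $U(p)$ effective descent) is the argument the authors have in mind.
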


\subsection*{Preservation via comonads}

In this subsection, we consider an adjunction
\begin{equation*}
  \begin{tikzcd}
    \cat A \ar[r,bend left,"L"] 
           \ar[r,phantom,"\adj"{rotate=-90,anchor=center}]
      & \cat C \ar[l,bend left,"U"]
  \end{tikzcd}
\end{equation*}
whose counit is denoted by \( \epsilon \colon LU \to \id \).

\begin{lemma}
  \label{lem:comonad}
  If the counit \( \epsilon \colon LU \to \id \) is a cartesian natural
  transformation, then \( LU \) preserves effective descent morphisms.
\end{lemma}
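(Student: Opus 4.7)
The plan is to reduce the statement to two classical facts: (i) a cartesian natural transformation has pullback naturality squares, and (ii) effective descent morphisms are stable under pullback. Concretely, for any morphism $p \colon x \to y$ in $\cat A$, the naturality square
\[
  \begin{tikzcd}
    LU(x) \ar[r,"\epsilon_x"] \ar[d,"LU(p)",swap]
      & x \ar[d,"p"] \\
    LU(y) \ar[r,"\epsilon_y",swap]
      & y
  \end{tikzcd}
\]
is a pullback by the cartesianness of $\epsilon$, which exhibits $LU(p)$ as the pullback of $p$ along $\epsilon_y$. Since effective descent morphisms are stable under pullback, $LU(p)$ is effective descent whenever $p$ is, which is what we want.

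To make this argument fit the generalized framework of the paper, I would first verify that $\cat A$ has pullbacks along $LU(p)$ whenever $p$ is effective descent (so that the notion of effective descent is meaningful for $LU(p)$). Given $g \colon w \to LU(y)$, one forms the pullback of $\epsilon_y \circ g$ along $p$ in $\cat A$, which exists since $p$ is effective descent. Pasting this pullback against the cartesian square above and invoking the pullback pasting lemma produces the desired pullback of $g$ along $LU(p)$; this is exactly the content of the commented-out auxiliary lemma.

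With the existence of pullbacks along $LU(p)$ in place, I would then invoke the standard fact that if $p$ is effective descent and its pullback along a morphism $\epsilon_y$ exists, then the resulting morphism is also effective descent. Applied to the pullback square above, this yields that $LU(p)$ is effective descent, finishing the proof.

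The main obstacle, such as it is, is purely bookkeeping rather than conceptual: one must keep careful track of existence of pullbacks in the generalized setting where not all pullbacks are assumed to exist in $\cat A$. The cartesianness hypothesis on $\epsilon$ does all the real work, collapsing the preservation question into a pullback-stability statement.
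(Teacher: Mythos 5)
Your proposal is correct and follows essentially the same route as the paper: the cartesianness of $\epsilon$ makes each naturality square a pullback exhibiting $LU(p)$ as a pullback of $p$, and pullback-stability of effective descent morphisms concludes. The extra care you take about the existence of pullbacks along $LU(p)$ is a sound (and welcome) supplement in the generalized setting, matching the auxiliary lemma the authors chose to omit.
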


\begin{proof}
  By definition, we have a pullback square
  \begin{equation*}
    \begin{tikzcd}
      LU(x) \ar[d,"\epsilon_x",swap]
          \ar[r,"LU(f)"]
          \ar[rd,"\ulcorner"{very near start,rotate=180},phantom]
      & LU(y) \ar[d,"\epsilon_y"] \\
      x \ar[r,"f",swap] & y
    \end{tikzcd}
  \end{equation*}
  for every morphism \( f \colon x \to y \), and since effective descent
  morphisms are stable under pullback, \( LU(f) \) is an effective descent
  morphism whenever \(f\) is. 
\end{proof}

From the preliminaries and Lemma~\ref{lem:comonad}, we conclude that:

\begin{theorem}
  \label{thm:use.counit.mono}
  We assume that \( \epsilon \) is a cartesian natural transformation that is also componentwise a monomorphism.
  
  If \( L \) is fully faithful, then \( U \) preserves effective descent $\Cla$-morphisms, where $\Cla $ consists of the morphisms $p$ such that $L$ preserves pullbacks along $U(p) $.
\end{theorem}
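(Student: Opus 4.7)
The plan is to recognize Theorem \ref{thm:use.counit.mono} as an immediate consequence of Lemma \ref{lem:guiding} applied to the composable pair of functors $U \colon \cat C \to \cat A$ and $L \colon \cat A \to \cat C$, together with the class $\Cla$ of morphisms in $\cat C$ singled out in the statement. Lemma \ref{lem:guiding} then reduces the task to two checks: that $LU \colon \cat C \to \cat C$ preserves effective descent $\Cla$-morphisms, and that $L$ reflects effective descent $U(\Cla)$-morphisms.

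The first check is supplied directly by Lemma \ref{lem:comonad}: since $\epsilon \colon LU \to \id$ is cartesian by hypothesis, $LU$ preserves all effective descent morphisms, hence in particular the $\Cla$-ones.

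For the second check, I would take $q = U(p) \in U(\Cla)$ and assume that $L(q) = LU(p)$ is effective descent in $\cat C$. By the defining property of $\Cla$, $L$ preserves pullbacks along $q$; combined with $L$ being fully faithful and $\epsilon$ componentwise monomorphic, we are placed in exactly the setting of Theorem \ref{thm:obs.r.adj} applied to the morphism $q$ in $\cat A$. The hypothesis there that $\epsilon_x$ be monomorphic for every pullback of the shape \eqref{eq:obs} is automatic from our assumption, so Theorem \ref{thm:obs.r.adj} yields that $q = U(p)$ is effective descent. This is the reflection statement needed.

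Combining the two checks through Lemma \ref{lem:guiding} closes the argument. The only delicate point is the second one: one must verify that the pullback-preservation condition cut out by $\Cla$ matches exactly what the proofs of Theorems \ref{thm:obs.r.adj} and \ref{thm:obs} require in order to derive reflection of effective descent morphisms, so that $U(\Cla)$ is really the class licensed for reflection by $L$. Once this matching is confirmed, the remainder of the argument is a routine assembly of the two preceding lemmas.
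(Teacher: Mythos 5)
Your proposal is correct and takes essentially the same route as the paper: the paper's own proof chains Lemma~\ref{lem:comonad} (the cartesian counit gives that \( LU \) preserves effective descent morphisms) directly with Theorem~\ref{thm:obs.r.adj} (full faithfulness of \( L \), preservation of pullbacks along \( U(p) \) from the definition of \( \Cla \), and the componentwise monomorphic counit give the reflection step), which is exactly the two-step factorization you make explicit through Lemma~\ref{lem:guiding}. The ``delicate point'' you flag is handled in the paper in the same way, by observing that membership of \( p \) in \( \Cla \) supplies precisely the pullback-preservation hypothesis that Theorems~\ref{thm:obs.r.adj} and~\ref{thm:obs} need.
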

\begin{proof}
  Let \( p \) be an effective descent morphism in \( \cat C \) that is in the class $\Cla$. 
  
  By Lemma
  \ref{lem:comonad}, \( LU(p) \) is an effective descent morphism. Now our
  result follows from Theorem~\ref{thm:obs.r.adj}: since \( \epsilon \) is a
  componentwise monomorphism, \( L \) is fully faithful, and \( L \) preserves
  pullbacks along \( U(p) \), we conclude that \( U(p) \) is an effective
  descent morphism.
\end{proof}

\section{Preservation results}
  \label{sect:preservation}
  Many examples of (Grothendieck) fibrations and opfibrations are under the
conditions of Theorem~\ref{thm:use.counit.mono}, and therefore preserve
effective descent morphisms -- these observations are the main result of this
note, which are studied in this section. 

This is split into two subsections, which respectively contain our results for
fibrations and opfibrations. These results are not dual to each other in any
sense -- the details are more delicate in the case of opfibrations.

\subsection*{Fibrations}

We fix a pseudofunctor 
\begin{equation*}
  \Ff \colon \cat A^\op \to \CAT
\end{equation*}
and we denote its Grothendieck construction by
\begin{equation*}
  U \colon \textstyle\int_{\cat A} \Ff \to \cat A.
\end{equation*}

\begin{lemma}
  \label{lem:fib.ladj.ff}
  If \( \Ff_a \) has an initial object \( \init_a \) for every object \( a \in
  \cat A \), then \( U \) has a fully faithful left adjoint \( L \), given on
  objects by \( L(a) = (a,\init_a) \).
\end{lemma}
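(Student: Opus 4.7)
The plan is to build $L$ explicitly, check functoriality (which is essentially free thanks to initiality), exhibit the adjunction through the hom-set formula~\eqref{eq:gc.homsets}, and conclude full faithfulness by observing that the unit is the identity. Throughout, the main simplification is that any morphism out of an initial object is unique, which trivializes all the coherence and compatibility conditions one might worry about.

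First, I define $L$ on morphisms. For $f \colon a \to a'$ in $\cat A$, formula \eqref{eq:gc.homsets} tells me that a morphism $L(a) \to L(a')$ over $f$ is exactly a morphism $\init_a \to \Ff_f(\init_{a'})$ in $\Ff_a$. Since $\init_a$ is initial, there is a unique such morphism, which I denote $!_f$; set $L(f) := (f, !_f)$. Identity preservation and functoriality then follow immediately, since in both $L(\id_a)$ and $L(g \circ f)$, the second component is forced to be the unique morphism out of $\init_a$, matching the composition rule of the Grothendieck construction regardless of the pseudofunctorial comparison isomorphisms of $\Ff$.

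Next, I establish the adjunction $L \dashv U$ by exhibiting a natural bijection. For $a \in \cat A$ and $(b,y) \in \int_{\cat A}\Ff$, formula \eqref{eq:gc.homsets} gives
\begin{equation*}
  \textstyle\int_{\cat A}\Ff\big(L(a), (b,y)\big)
    = \sum_{f \in \cat A(a,b)} \Ff_a\big(\init_a, \Ff_f(y)\big).
\end{equation*}
Since $\init_a$ is initial in $\Ff_a$, each summand $\Ff_a(\init_a, \Ff_f(y))$ is a singleton, so the total set is canonically isomorphic to $\cat A(a,b) = \cat A(a, U(b,y))$. Naturality in $a$ and in $(b,y)$ is routine: pre- and post-composition on the left side correspond under the bijection to pre- and post-composition on the right, again because uniqueness out of $\init_a$ forces the second component of every composite to be determined.

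Finally, full faithfulness of $L$ is equivalent to invertibility of the unit $\eta$ of $L \dashv U$. Since $UL(a) = a$ and the unit $\eta_a \colon a \to UL(a) = a$ corresponds under the hom-bijection above to the identity morphism $L(a) \to L(a)$, it must itself be $\id_a$; hence $\eta$ is a natural isomorphism and $L$ is fully faithful. No serious obstacle arises: the only thing to be vigilant about is that the pseudofunctor comparison cells $\Ff_f \Ff_g \cong \Ff_{g \circ f}$ never intervene in any nontrivial way, because all morphisms with domain $\init_a$ collapse to a point.
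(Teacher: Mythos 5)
Your proposal is correct and follows essentially the same route as the paper: both rest on the observation that each summand $\Ff_a(\init_a,\Ff_f(x))$ in the hom-set formula \eqref{eq:gc.homsets} is a singleton, yielding the natural bijection $\int_{\cat A}\Ff(L(a),(b,x)) \iso \cat A(a,b)$. The only cosmetic difference is at the end: you deduce full faithfulness from the unit being an isomorphism, while the paper substitutes $x=\init_b$ into the same bijection; these are equivalent one-line conclusions.
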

\begin{proof}
  We have the following natural isomorphism
  \begin{equation}
    \label{eq:left.adj.fib.0}
    \int_{\cat A} \Ff \big(L(a),(b,x)\big)
      \iso \sum_{f \in \cat A(a,b)} \Ff_a\big(\init_a,\Ff_f(x)\big)
      \iso \cat A(a,b) = \cat A\big(a,U(b,x)\big),
  \end{equation}
  exhibiting the adjunction \( L \adj U \). By taking \( x = \init_b \) in
  \eqref{eq:left.adj.fib.0} we conclude that \( L \) is fully faithful.
\end{proof}

\begin{lemma}
  \label{lem:fib.result}
  If \( \Ff_a \) has a strict initial object for all \( a \) in \( \cat A \),
  and the change-of-base functors \( \Ff_f \) preserve them for every morphism
  \( f \) in \( \cat A \), then 
  \begin{enumerate}[label=(\roman*),noitemsep]
    \item
      \label{enum:counit.cart}
      the counit \( \epsilon \) of \( L \adj U \) is a cartesian natural
      transformation,
    \item
      \label{enum:left.adj.pb}
      \( L \) preserves pullbacks,
    \item
      \label{enum:counit.mono}
      \( \epsilon \) is a componentwise monomorphism.
  \end{enumerate}
\end{lemma}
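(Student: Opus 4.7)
The common strategy for all three parts is to reduce pullback questions in the total category $\int_{\cat A} \Ff$ to fiberwise pullback questions via Lemma~\ref{lem:gc.pb.criteria}, and then trivialize the resulting fiberwise squares using the hypothesis that every $\Ff_f$ preserves strict initial objects, together with Lemma~\ref{lem:strict.init.pb}.

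First I would explicitly compute the relevant morphisms. Under the hom-set bijection \eqref{eq:left.adj.fib.0}, $\epsilon_{(b,x)} \colon (b, \init_b) \to (b,x)$ corresponds to $\id_b$, so $\epsilon_{(b,x)} = (\id_b, !_x)$ where $!_x \colon \init_b \to x$ is the unique morphism out of the initial object of $\Ff_b$; similarly, for any $p \colon a \to b$ in $\cat A$ one has $L(p) = (p, !_p)$, with $!_p \colon \init_a \to \Ff_p(\init_b)$ an isomorphism in $\Ff_a$ by the preservation hypothesis.

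For (i), given any $(f, \phi) \colon (a, w) \to (b, x)$, the corresponding naturality square of $\epsilon$ has underlying square in $\cat A$ the trivial pullback $f \circ \id_a = \id_b \circ f$. Lemma~\ref{lem:gc.pb.criteria} then reduces the pullback check to verifying, for every $k \colon a' \to a$, that a certain square in $\Ff_{a'}$ is a pullback; its top two corners are $\Ff_k(\init_a)$ and $\Ff_k \Ff_f(\init_b)$, both (strict) initial in $\Ff_{a'}$ by the preservation hypothesis, so Lemma~\ref{lem:strict.init.pb} closes the argument. Part (ii) is completely analogous: applying $L$ to a pullback square in $\cat A$ yields a square whose fiberwise version (under any $\Ff_k$) has all four corners (strict) initial in $\Ff_{a'}$, and any such square is trivially a pullback.

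For (iii), if $(g, \gamma), (g', \gamma') \colon (e, v) \to (b, \init_b)$ have equal composite with $\epsilon_{(b,x)} = (\id_b, !_x)$, then comparison of first components gives $g = g'$; for the second components $\gamma, \gamma' \colon v \to \Ff_g(\init_b)$, the codomain is strict initial in $\Ff_e$ by the preservation hypothesis, so by strictness $v$ itself is initial and the hom-set $\Ff_e(v, \Ff_g(\init_b))$ is a singleton, forcing $\gamma = \gamma'$. The only nontrivial observation across the three parts is this last one -- that strict initiality of the codomain collapses the hom-set into it to a singleton -- the rest being routine verification against Lemma~\ref{lem:gc.pb.criteria}.
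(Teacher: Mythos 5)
Your proof is correct and follows essentially the same route as the paper's: parts (i) and (ii) reduce to fiberwise squares via Lemma~\ref{lem:gc.pb.criteria}, which are then settled because their upper corners are (strict) initial by the preservation hypothesis (Lemma~\ref{lem:strict.init.pb}), and part (iii) uses strictness of \( \Ff_h(\init_b) \) to force the domain to be initial and hence collapse the second components. No gaps.
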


\begin{proof} 
  Via Lemma \ref{lem:gc.pb.criteria}, we may conclude that
  \ref{enum:counit.cart} holds if the following diagram is a pullback
  \begin{equation*}
    \begin{tikzcd}
      \Ff_p(0_a) \ar[r] \ar[d] 
        & \Ff_p\Ff_f(0_b) \ar[d] \\
      \Ff_p(x) \ar[r,"\Ff_p(\phi)",swap] & \Ff_p\Ff_f(y)
    \end{tikzcd}
  \end{equation*}
  for every morphism \( (f,\phi) \colon (a,x) \to (b,y) \) in \( \int_{\cat A}
  \Ff \), and every morphism \( p \colon e \to a \) in \( \cat A \). Since
  the change-of-base functors preserve initial objects, this is an immediate
  consequence of Lemma \ref{lem:strict.init.pb}.

  Let
  \begin{equation*}
    \begin{tikzcd}
      a \ar[r,"f"] \ar[d,"g",swap]
        \ar[rd,"\ulcorner"{rotate=180,very near start}, phantom]
        & b \ar[d,"h"] \\
      c \ar[r,"k",swap] & d
    \end{tikzcd}
  \end{equation*}
  be a pullback diagram. Since the change-of-base functors preserve initial
  objects, the following diagram
  \begin{equation}
    \label{eq:zero.pb}
    \begin{tikzcd}
      \Ff_p(0_a) \ar[r,"\iso"] \ar[d,"\iso",swap] 
        & \Ff_p\Ff_f(0_b) \ar[d,"\iso"] \\
      \Ff_p\Ff_g(0_c) \ar[r,"\iso"] 
        & \Ff_p\Ff_g\Ff_k(0_d) \iso \Ff_p\Ff_f\Ff_h(0_d)
    \end{tikzcd}
  \end{equation}
  is a pullback diagram for any \( p \colon e \to a \). Thus, we may apply
  Lemma \ref{lem:gc.pb.criteria} to conclude that \ref{enum:left.adj.pb}
  holds.

  To prove \ref{enum:counit.mono}, we note that \( \epsilon \) is given at \(
  (b,y) \) by the pair
  \begin{equation*}
    (\id_b, u) \colon (b,\init_b) \to (b,y),
  \end{equation*}
  where \( u \colon \init_b \to \Ff_{\id_b}(y) \) is the unique morphism. If
  we have 
  \begin{equation*}
    (\id_b,u) \circ (h,\zeta) = (\id_b,u) \circ (k,\xi)
  \end{equation*}
  for morphisms \( (h,\zeta), (k,\xi) \colon (a,x) \to (b,\init_b) \), it
  follows that \( h = k \), and since change-of-base functors preserve
  (strict) initial objects, we must have \( x \iso \init_a \), and hence \(
  \zeta = \xi \).  Thus, we conclude that \( \epsilon_{b,y} \) is a
  monomorphism.
\end{proof}

As a corollary, we obtain that:

\begin{theorem}
  \label{thm:fib.main}
  If \( \Ff_a \) has a strict initial object \( \init_a \) for each \( a \) in
  \( \cat A \), and \( \Ff_f \) preserves them for all morphisms \( f \) in \(
  \cat A \), then \( U \colon \int_{\cat A} \Ff \to \cat B \) preserves
  effective descent morphisms.
\end{theorem}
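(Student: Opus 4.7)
The plan is to assemble the three preparatory results — Lemma \ref{lem:fib.ladj.ff}, Lemma \ref{lem:fib.result}, and Theorem \ref{thm:use.counit.mono} — which between them have already isolated all of the abstract machinery needed. The strategy is to exhibit $U \colon \int_{\cat A} \Ff \to \cat A$ as the right adjoint in an adjunction whose left adjoint is fully faithful and pullback-preserving, and whose counit is cartesian and componentwise monomorphic; this matches exactly the hypothesis of Theorem \ref{thm:use.counit.mono}.

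Concretely, I would first invoke Lemma \ref{lem:fib.ladj.ff} to obtain the fully faithful left adjoint $L \adj U$ sending $a$ to $(a, \init_a)$; this step only needs the existence of each $\init_a$, which is given. Then I would apply all three clauses of Lemma \ref{lem:fib.result} to read off, from the assumption that each $\init_a$ is strict and preserved by every $\Ff_f$, that the counit $\epsilon \colon LU \to \id$ is cartesian, that $L$ preserves pullbacks in $\cat A$, and that $\epsilon$ is a componentwise monomorphism. At this point the hypotheses of Theorem \ref{thm:use.counit.mono} are in place.

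Applying that theorem, $U$ preserves effective descent $\Cla$-morphisms, where $\Cla$ is the class of morphisms $p$ in $\int_{\cat A} \Ff$ along whose image $U(p)$ in $\cat A$ the functor $L$ preserves pullbacks. Since $L$ preserves all pullbacks by the already-invoked Lemma \ref{lem:fib.result}, this class $\Cla$ is the whole morphism class of $\int_{\cat A} \Ff$, and the stated preservation follows for every effective descent morphism. The proof therefore has essentially no obstacle at this stage: the real work was already absorbed into Lemma \ref{lem:fib.result}, where the strictness hypothesis is used through Lemma \ref{lem:strict.init.pb} together with the pullback criterion for Grothendieck constructions, Lemma \ref{lem:gc.pb.criteria}. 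The only subtlety that remains here is the routine bookkeeping that $\Cla$ collapses to the full class of morphisms, which is immediate from the unrestricted pullback preservation of $L$.
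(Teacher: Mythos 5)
Your proposal is correct and follows exactly the paper's own argument: combine Lemma \ref{lem:fib.ladj.ff} and Lemma \ref{lem:fib.result} to verify the hypotheses of Theorem \ref{thm:use.counit.mono}, noting that the class $\Cla$ is all morphisms because $L$ preserves all pullbacks. The paper's proof is just a one-line version of the same reasoning.
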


\begin{proof}  
  By Lemmas \ref{lem:fib.ladj.ff} and \ref{lem:fib.result}, \( U \) has a left
  adjoint \( L \) which enjoys all the properties needed to apply
  Theorem~\ref{thm:use.counit.mono}.
\end{proof}

\begin{corollary}
  \label{cor:pbfib.preserve}
  Let \( \Phi \colon \cat B \to \cat A \) be any functor.  If \( \Ff_a \) has
  a strict initial object \( \init_a \) for all \( a \) in \( \cat A \), and
  \( \Ff_f \) preserves initial objects for all morphisms \( f \) in \( \cat A
  \), then the Grothendieck construction
  \begin{equation*}
    \textstyle\int_{\cat B} \Ff \circ \Phi^\op \to \cat B
  \end{equation*}
  preserves effective descent morphisms as well.
\end{corollary}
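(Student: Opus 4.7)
The plan is to obtain Corollary~\ref{cor:pbfib.preserve} as an immediate application of Theorem~\ref{thm:fib.main} to the composite pseudofunctor $\Ff \circ \Phi^\op \colon \cat B^\op \to \CAT$. The only thing to check is that the hypotheses on $\Ff$ transfer to $\Ff \circ \Phi^\op$, which is essentially a tautology since $\Phi$ sends objects of $\cat B$ to objects of $\cat A$ and morphisms of $\cat B$ to morphisms of $\cat A$.

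First, I would unpack the fibers and change-of-base functors: for any $b \in \cat B$ we have $(\Ff \circ \Phi^\op)_b = \Ff_{\Phi(b)}$, and for any morphism $g$ in $\cat B$ we have $(\Ff \circ \Phi^\op)_g = \Ff_{\Phi(g)}$ (up to the coherent isomorphisms packaged in the pseudofunctor $\Ff \circ \Phi^\op$, which do not affect the properties of interest since isomorphisms preserve initial objects and strictness).

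Next, I would observe that since $\Phi(b)$ is an object of $\cat A$, the hypothesis on $\Ff$ yields that $\Ff_{\Phi(b)}$ has a strict initial object $\init_{\Phi(b)}$; similarly, since $\Phi(g)$ is a morphism in $\cat A$, the functor $\Ff_{\Phi(g)}$ preserves initial objects. Thus $\Ff \circ \Phi^\op$ satisfies the hypotheses of Theorem~\ref{thm:fib.main}.

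Applying Theorem~\ref{thm:fib.main} to $\Ff \circ \Phi^\op$ yields that the Grothendieck construction $\int_{\cat B} \Ff \circ \Phi^\op \to \cat B$ preserves effective descent morphisms, which is precisely the conclusion of the corollary. There is no real obstacle here — the only mild subtlety is confirming that the coherence isomorphisms of the composite pseudofunctor do not interfere with strictness of the initial objects or with their preservation, but this is routine since strictness of an initial object is invariant under equivalence of categories.
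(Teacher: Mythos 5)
Your proposal is correct and is exactly the argument the paper intends: the corollary is stated without proof precisely because it is the instance of Theorem~\ref{thm:fib.main} for the composite pseudofunctor $\Ff \circ \Phi^\op \colon \cat B^\op \to \CAT$, whose fibers and change-of-base functors are those of $\Ff$ at objects and morphisms in the image of $\Phi$, so the hypotheses transfer verbatim. Your remark that the coherence isomorphisms of the composite do not affect strictness or preservation of initial objects is a reasonable (if routine) point to make explicit.
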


\subsection*{Opfibrations}

We fix a pseudofunctor 
\begin{equation*}
  \Hh \colon \cat A \to \CAT,
\end{equation*}
and we denote its Grothendieck construction by
\begin{equation*}
  V \colon \textstyle\int^{\cat A} \Hh \to \cat A.
\end{equation*}

\begin{lemma}
  \label{lem:opfib.ladj.ff}
  If \( \Hh_a \) has an initial object \( \init_a \) for every object \( a \in
  \cat A \), and the change-of-base functors \( \Hh_f \) preserve them for
  every morphism \( f \) in \( \cat A \), then \( V \) has a fully faithful
  left adjoint \( K \), given on objects by \( K(a) = (a, \init_a) \).
\end{lemma}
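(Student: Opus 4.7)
The plan is to mimic the proof of Lemma~\ref{lem:fib.ladj.ff}, but using the hom-set formula \eqref{eq:opgc.homsets} of the op-Grothendieck construction instead of \eqref{eq:gc.homsets}. The distinctive feature of the opfibration case is that the hypothesis on preservation of initial objects by the change-of-base functors $\Hh_f$ must be invoked at the very start, in order to ensure both that $K$ is well-defined as a functor and that the adjunction isomorphism holds.

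First, I would set $K(a) := (a,\init_a)$ on objects. On a morphism $f\colon a \to b$ in $\cat A$, I would put $K(f) := (f, \iota_f)$, where $\iota_f \colon \Hh_f(\init_a) \to \init_b$ is the unique morphism available in $\Hh_b$ by the hypothesis that $\Hh_f(\init_a)$ is initial (and which is necessarily an isomorphism). Functoriality reduces to a uniqueness check: any two candidates for $K(g \circ f)$ and $K(g) \circ K(f)$ must agree over the fibre component because morphisms out of $\Hh_{gf}(\init_a)$ into $\init_c$ are uniquely determined, and similarly for the identity case.

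Next, I would establish the adjunction $K \adj V$ by computing
\begin{equation*}
  \textstyle\int^{\cat A}\Hh\big(K(a),(b,y)\big)
    = \sum_{f \in \cat A(a,b)} \Hh_b\big(\Hh_f(\init_a), y\big)
    \iso \sum_{f \in \cat A(a,b)} \Hh_b(\init_b, y)
    \iso \cat A(a,b) = \cat A\big(a, V(b,y)\big),
\end{equation*}
where the first isomorphism uses that $\Hh_f$ preserves initial objects, and the second uses the initiality of $\init_b$. Naturality in both variables is again a uniqueness argument: all morphisms in sight whose source is initial are uniquely determined by their source and target.

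Finally, for fully-faithfulness, I would specialise $(b,y)$ to $K(b) = (b,\init_b)$, yielding
\begin{equation*}
  \textstyle\int^{\cat A}\Hh\big(K(a),K(b)\big) \iso \cat A(a,b),
\end{equation*}
and verify that this bijection coincides with the action of $K$ on morphisms — which is automatic from the definition of $\iota_f$. The only mild obstacle is the coherence bookkeeping for the pseudofunctor structure of $\Hh$, but since every relevant hom-set involves an initial object in its source, each equation reduces to the uniqueness property of initiality and nothing substantive needs to be checked.
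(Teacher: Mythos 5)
Your proposal is correct and follows essentially the same route as the paper: the same chain of isomorphisms exhibiting \( K \adj V \) (using that \( \Hh_f(\init_a) \iso \init_b \) in the first step) and the same specialisation to \( (b,\init_b) \) for fully-faithfulness. The only difference is that you spell out the functoriality of \( K \) and the naturality checks, which the paper leaves implicit.
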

\begin{proof}
  This is similar to the proof of \ref{lem:fib.ladj.ff}; since \(
  \Hh_f(\init_a) \iso \init_b \), we have
  \begin{equation}
    \label{eq:left.adj.fib.1}
    \int^{\cat A} \Hh \big(K(a),(b,x)\big)
      \iso \sum_{f \in \cat A(a,b)} \Hh_b\big(\Hh_f(\init_a),x\big)
      \iso \sum_{f \in \cat A(a,b)} \Hh_b\big(\init_b,x\big)
      \iso \cat A(a,b) = \cat A\big(a,V(b,x)\big),
  \end{equation}
  which witnesses \( K \adj V \). Taking \( x = \init_b \) in
  \eqref{eq:left.adj.fib.1} confirms that \( K \) is fully faithful. 
\end{proof}

\begin{lemma}
  \label{lem:opfib.result}
  If \( \Hh_a \) has a strict initial object \( \init_a \) for all \( a \) in
  \( \cat A \) that is preserved by the change-of-base functors \( \Hh_f \)
  for all morphisms \( f \) in \( \cat A \), then
  \begin{enumerate}[label=(\alph*)]
    \item
      \label{enum:nat.cond}
      the naturality square of \( \nu \) at a morphism \( (f,\phi) \) is a
      pullback diagram if and only if \( \Hh_f \) reflects initial objects,
    \item
      \label{enum:prsv.cond}
      the left adjoint \( K \) preseves the pullback of a pair of arrows \(
      f,g \) if and only if the change-of-base functor induced by the diagonal
      of the pullback square reflects initial objects,
    \item
      \label{enum:counit.mono.1}
      \( \nu \) is a componentwise monomorphism.
  \end{enumerate}
\end{lemma}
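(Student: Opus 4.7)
The plan is to reduce each of the three parts to the pullback characterization of Lemma~\ref{lem:opgc.pb.criteria}, leveraging strictness of the initial objects and their preservation by the change-of-base functors.

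For (a), I would unpack the naturality square of $\nu$ at $(f, \phi) \colon (a, x) \to (b, y)$,
\begin{equation*}
\begin{tikzcd}
(a, \init_a) \ar[r] \ar[d, "\nu_{(a,x)}"'] & (b, \init_b) \ar[d, "\nu_{(b,y)}"] \\
(a, x) \ar[r, "(f, \phi)"'] & (b, y),
\end{tikzcd}
\end{equation*}
whose underlying base square is trivially a pullback in $\cat A$. Applying Lemma~\ref{lem:opgc.pb.criteria}, the pullback condition becomes: for every $v \in \Hh_a$ and every compatible pair $\sigma \colon \Hh_f(v) \to \init_b$, $\tau \colon v \to x$, there exists a unique $\theta \colon v \to \init_a$ realizing them through the unique maps out of $\init_a$. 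Strictness of $\init_b$ turns the existence of $\sigma$ into the condition $\Hh_f(v) \iso \init_b$, while compatibility becomes automatic since both composites are morphisms out of an initial object. The uniqueness of $\theta$ then collapses to $v \iso \init_a$, giving exactly the reflection of initial objects by $\Hh_f$.

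For (b), I would perform the same reduction on the image under $K$ of the pullback square of $f$ and $g$. The base square is a pullback by hypothesis, and every 2-cell component of the lifted square is an isomorphism by preservation of initial objects. The pullback criterion then says that for every $v \in \Hh_a$ with $\Hh_p(v) \iso \init_b$ and $\Hh_q(v) \iso \init_c$ (where $p, q$ are the legs of the pullback), $v$ must be initial. Using once more preservation of initial objects by $\Hh_f$ and $\Hh_g$, I can rewrite this joint condition as $\Hh_{fp}(v) \iso \init_d$, yielding the equivalence with the change-of-base functor along the diagonal reflecting initial objects.

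For (c), the monicity of $\nu_{(b, y)} = (\id_b, \iota_b^y)$ follows directly from strictness of $\init_b$: given $(h, \zeta), (k, \xi) \colon (a, x) \to (b, \init_b)$ that are equalized after post-composition with $\nu_{(b, y)}$, the base components match ($h = k$), and both 2-cell components $\zeta, \xi \colon \Hh_h(x) \to \init_b$ can only exist if $\Hh_h(x) \iso \init_b$, in which case each must be the unique isomorphism between initial objects, hence $\zeta = \xi$. I expect the main obstacle to be the careful bookkeeping in (a): one must verify that the pullback criterion, once collapsed via strictness and initial-preservation, genuinely encapsulates reflection of initial objects by $\Hh_f$ alone, independently of the chosen fibre data $(a, x)$ and $(b, y)$.
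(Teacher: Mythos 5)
Your proposal is correct and follows essentially the same route as the paper's proof: parts \ref{enum:nat.cond} and \ref{enum:prsv.cond} are reduced to the criterion of Lemma~\ref{lem:opgc.pb.criteria}, with strictness collapsing the existence of the comparison cones to the conditions \( \Hh_f(v) \iso \init_b \) (resp.\ \( \Hh_{fp}(v) \iso \init_d \) via preservation), and part \ref{enum:counit.mono.1} is the same direct strictness argument showing the 2-cell components into \( \init_b \) must coincide. The bookkeeping concern you raise at the end is present in the paper's argument as well and is handled there exactly as you propose.
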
    

\begin{proof}
  By Lemma \ref{lem:opgc.pb.criteria}, the naturality square of \( \nu \) at a
  morphism \( (f,\phi) \colon (a,x) \to (b,y) \) is a pullback diagram if and
  only if for every commutative diagram
  \begin{equation*}
    \begin{tikzcd}
      \Hh_f(v) \ar[r] \ar[d,"\Hh_f(\gamma)",swap] 
        & \init_b \ar[d] \\
      \Hh_f(x) \ar[r,swap,"\phi"]
        & y
    \end{tikzcd}
  \end{equation*}
  we have \( v \iso \init_a \). Since \( \init_b \) is strict, every such
  diagram commutes if and only if \( \Hh_f(v) \iso \init_b \), so
  \ref{enum:nat.cond} holds.

  If the following diagram 
  \begin{equation*}
    \begin{tikzcd}
      a \ar[r,"h"] \ar[d,"k",swap]
        & b \ar[d,"f"] \\
      c \ar[r,"g",swap] & d
    \end{tikzcd}
  \end{equation*}
  is a pullback in \( \cat A \), then by Lemma \ref{lem:opgc.pb.criteria}, \(
  K \) preserves this pullback if and only if for every commutative square
  \begin{equation*}
    \begin{tikzcd}
      \Hh_g\Hh_k(v) \iso \Hh_f\Hh_h(v)
        \ar[r] \ar[d]
      & \Hh_f(\init_b) \ar[d] \\
      \Hh_g(\init_c) \ar[r]
        & \init_d
    \end{tikzcd}
  \end{equation*}
  we have \( v \iso \init_a \). Since \( \init_d \) is strict, and
  change-of-base functors preserve initial objects, every such diagram
  commutes if and only if \( \Hh_f\Hh_h(v) \iso \Hh_g\Hh_k(v) \iso \init_d \),
  which confirms \ref{enum:prsv.cond}.

  \( \nu \) is given at  \( (b,y) \) by the pair
  \begin{equation*}
    (\id_b, u) \colon (b,\init_b) \to (b,y),
  \end{equation*}
  where \( u \colon \Hh_{\id_b}(\init_b) \to y \) is the unique morphism,
  since \( \Hh_{\id_b}(\init_b) \iso \init_b \). If 
  \begin{equation*}
    (\id_b,u) \circ (h,\zeta) = (\id_b,u) \circ (k,\xi)
  \end{equation*}
  for morphisms \( (h,\zeta), (k,\xi) \colon (a,x) \to (b,\init_b) \), it
  follows that \( h = k \), and since initial objects are strict, we must have
  \( \zeta = \xi \colon \Hh_f(x) \iso \init_b \). Thus, we conclude that
  \ref{enum:counit.mono.1} holds.
\end{proof}

As an immediate corollary, we obtain:
\begin{theorem}
  \label{thm:opfib.main}
  If \( \Hh_a \) has a strict initial object \( \init_a \) for all \( a \),
  which is created by every change-of-base functor, then \( V \colon \int^{\cat
  A} \Hh \to \cat A \) preserves effective descent morphisms.
\end{theorem}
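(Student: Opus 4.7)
The plan is to deduce the theorem from Theorem~\ref{thm:use.counit.mono} applied to the adjunction $K \dashv V$. Since the hypothesis that $\init_a$ is \emph{created} by every change-of-base functor $\Hh_f$ includes, in particular, that each $\Hh_f$ preserves $\init_a$, Lemma~\ref{lem:opfib.ladj.ff} supplies a fully faithful left adjoint $K \colon \cat A \to \int^{\cat A} \Hh$ to $V$, given on objects by $K(a) = (a, \init_a)$. Write $\nu \colon KV \to \id$ for its counit.

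To invoke Theorem~\ref{thm:use.counit.mono} with the associated class $\Cla$ equal to all morphisms of $\int^{\cat A} \Hh$, I need to verify: that $\nu$ is componentwise monic; that $\nu$ is cartesian; and that $K$ preserves pullbacks along $V(p)$ for every morphism $p$ in $\int^{\cat A} \Hh$. The monomorphic property is Lemma~\ref{lem:opfib.result}\ref{enum:counit.mono.1}. The cartesian property is Lemma~\ref{lem:opfib.result}\ref{enum:nat.cond}, which reduces cartesianness of the naturality square at $(f,\phi)$ to the reflection of initial objects by $\Hh_f$, and the creation hypothesis provides exactly this reflection.

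For pullback preservation, take any pullback square in $\cat A$ with diagonal $d$; Lemma~\ref{lem:opfib.result}\ref{enum:prsv.cond} reduces preservation of this square by $K$ to reflection of initial objects by the change-of-base functor along $d$, which again follows from creation. Hence $K$ preserves every pullback that exists in $\cat A$, in particular every pullback along $V(p) = p$, and Theorem~\ref{thm:use.counit.mono} yields that $V$ preserves effective descent morphisms.

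The proof is therefore essentially a bookkeeping exercise, and the mildest of obstacles is conceptual rather than technical: one must notice that the two-sided strength of \emph{creation} (preservation and reflection of initial objects) matches precisely the two distinct needs of Lemma~\ref{lem:opfib.ladj.ff} (preservation, to build the fully faithful left adjoint) and of clauses \ref{enum:nat.cond}--\ref{enum:prsv.cond} of Lemma~\ref{lem:opfib.result} (reflection, to obtain cartesianness and pullback preservation). Once this is observed, the conclusion follows directly from the lemmas above and Theorem~\ref{thm:use.counit.mono}.
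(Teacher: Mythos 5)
Your proposal is correct and follows essentially the same route as the paper: Theorem~\ref{thm:opfib.main} is stated there as an immediate corollary of Lemmas~\ref{lem:opfib.ladj.ff} and~\ref{lem:opfib.result} via Theorem~\ref{thm:use.counit.mono}, exactly as you assemble it. Your explicit remark that \emph{creation} splits into the preservation needed for Lemma~\ref{lem:opfib.ladj.ff} and the reflection needed for clauses \ref{enum:nat.cond}--\ref{enum:prsv.cond} (the latter applied to the diagonal, where reflection is inherited from the two factors by pseudofunctoriality) is precisely the bookkeeping the paper leaves implicit.
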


\begin{corollary}
  \label{cor:pbopfib.preserve}
  Let \( \Phi \colon \cat B \to \cat A \) be any functor.  If \( \Hh_a \) has
  a strict initial object \( \init_a \) for all \( a \) in \( \cat A \), and
  \( \Hh_f \) creates them for all morphisms \( f \) in \( \cat A \), then the
  op-Grothendieck construction
  \begin{equation*}
    \textstyle\int^{\cat B} \Hh \circ \Phi \to \cat B
  \end{equation*}
  preserves effective descent morphisms as well.
\end{corollary}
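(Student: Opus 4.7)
The plan is to derive this as an immediate instance of Theorem~\ref{thm:opfib.main}, by transferring the hypotheses from $\Hh$ to the composite pseudofunctor $\Hh \circ \Phi \colon \cat B \to \CAT$. Concretely, for each object $b$ in $\cat B$, the fiber of $\Hh \circ \Phi$ over $b$ is $\Hh_{\Phi(b)}$, which by hypothesis has a strict initial object $\init_{\Phi(b)}$; and for each morphism $g \colon b \to b'$ in $\cat B$, the change-of-base functor $(\Hh \circ \Phi)_g = \Hh_{\Phi(g)}$ creates initial objects by hypothesis (applied to the morphism $\Phi(g)$ in $\cat A$).

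With those two observations, the pseudofunctor $\Hh \circ \Phi$ satisfies exactly the assumptions of Theorem~\ref{thm:opfib.main}, so the associated op-Grothendieck construction
\[
  \textstyle\int^{\cat B} \Hh \circ \Phi \to \cat B
\]
preserves effective descent morphisms. There is essentially no obstacle here; the only thing worth being careful about is that "created" in the hypothesis is used to match the phrasing of Theorem~\ref{thm:opfib.main}, which in turn packages items \ref{enum:nat.cond}--\ref{enum:counit.mono.1} of Lemma~\ref{lem:opfib.result} into a single clean sufficient condition. So the proof reduces to verifying this transfer and invoking the theorem.
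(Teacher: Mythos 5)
Your proof is correct and matches the paper's intent exactly: the paper states this corollary without a separate argument precisely because it is the instance of Theorem~\ref{thm:opfib.main} applied to the composite pseudofunctor $\Hh \circ \Phi$, whose fibers and change-of-base functors inherit the required properties from $\Hh$ as you describe. Nothing further is needed.
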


\section{Examples}
  \label{sect:examples}
  
\subsection{Codomain bifibration:}
\label{subsect:cod}

Let \( \cat A \) be a category with a strict initial object, and we consider
the ``direct image'' pseudofunctor
\begin{align*}
  \Hh \colon \cat A &\to \CAT \\
  a &\mapsto \cat A \comma a \\
  p &\mapsto p_!
\end{align*}
where \( p_! \colon \cat A \comma a \to \cat A \comma b \) is given by \( f
\mapsto p \circ f \). We denote its op-Grothendieck construction by \(
\mathsf{cod} \colon \cat A^2 \to \cat A \) -- the \textit{codomain
opfibration}.

\begin{lemma}
  The codomain opfibration preserves effective descent morphisms.
\end{lemma}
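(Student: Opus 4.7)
The plan is to apply Theorem~\ref{thm:opfib.main} to the direct image pseudofunctor $\Hh \colon \cat A \to \CAT$, whose op-Grothendieck construction is exactly $\mathsf{cod} \colon \cat A^2 \to \cat A$ (as indicated in the setup). It therefore suffices to verify two conditions: that each fibre $\Hh_a = \cat A \comma a$ has a strict initial object, and that each change-of-base functor $\Hh_p = p_!$ creates these strict initial objects.

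For the first condition, I will show that $(\init, !_a)$, where $!_a \colon \init \to a$ is the unique such morphism, serves as a strict initial object of $\cat A \comma a$. Initiality is clear: a morphism $(\init, !_a) \to (x, f)$ in the comma category amounts to a morphism $\init \to x$ in $\cat A$ over $a$, which exists uniquely by initiality of $\init$. Strictness then reduces to the strictness of $\init$ in $\cat A$: any morphism $(x, f) \to (\init, !_a)$ restricts to a morphism $x \to \init$ in $\cat A$, which forces $x \iso \init$; since $x$ is then itself initial in $\cat A$, the morphism $f$ is uniquely determined, and we obtain an isomorphism $(x, f) \iso (\init, !_a)$ in $\cat A \comma a$.

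For the second condition, preservation is immediate: $p_!(\init, !_a) = (\init, p \circ !_a) = (\init, !_b)$, the initial object of $\cat A \comma b$. For the reflection half of creation, suppose $p_!(x, f) = (x, p \circ f)$ is isomorphic to $(\init, !_b)$ in $\cat A \comma b$; then in particular $x \iso \init$ in $\cat A$, and the same argument as above yields $(x, f) \iso (\init, !_a)$ in $\cat A \comma a$.

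I anticipate no substantial obstacle here: all of the content is in the strictness of $\init$ in $\cat A$, which cascades down to strictness of the fibrewise initial objects and to creation by each change-of-base functor, allowing Theorem~\ref{thm:opfib.main} to be invoked directly.
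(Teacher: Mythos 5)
Your proposal is correct and follows exactly the same route as the paper's proof: identify $(\init, !_a)$ as a strict initial object of each fibre $\cat A \comma a$, check that the change-of-base functors $p_!$ create these, and invoke Theorem~\ref{thm:opfib.main}. The paper states these verifications without detail, whereas you spell them out; the substance is identical.
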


\begin{proof}
  The unique morphism \( \init \to a \) is a strict initial object in
  \( \cat A \comma a \) for every \( a \), and the change-of-base functors \(
  p_! \colon \cat A \comma a \to \cat A \comma b \) create them. The result
  is therefore confirmed by Theorem~\ref{thm:opfib.main}.
\end{proof}

Let \( \Phi \colon \cat B \to \cat A \), and consider the Artin gluing (also
known as \textit{scone}) \( \cat A \comma \Phi \), together with the
projection \( U \colon \cat A \comma \Phi \to \cat B \). This fibration, called herein the \textit{scone forgetful functor},  has been extensively studied in the literature for its rich properties, particularly as a categorical viewpoint on logical relations techniques~\cite{MS93, HSV20, LV22, LV23, LV24b}. 
As an application of our framework, we find that this fibration is particularly relevant to our context in descent theory. Specifically, we obtain the following result:

\begin{lemma}
The scone forgetful functor  \( U \)  preserves effective descent morphisms.
\end{lemma}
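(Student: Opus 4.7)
The plan is to realize the scone projection $U \colon \cat A \comma \Phi \to \cat B$ as an op-Grothendieck construction and then invoke Corollary~\ref{cor:pbopfib.preserve}. Concretely, I would take $\Hh \colon \cat A \to \CAT$ to be exactly the direct image pseudofunctor $a \mapsto \cat A \comma a$, $p \mapsto p_!$ used just above in this subsection for the codomain opfibration, and claim that $U$ is (isomorphic, over $\cat B$, to) the op-Grothendieck construction
\begin{equation*}
    \textstyle\int^{\cat B} \Hh \circ \Phi \to \cat B.
\end{equation*}
Once this identification is in place, the lemma is an immediate instance of Corollary~\ref{cor:pbopfib.preserve}, since we have already noted in the previous proof that each slice $\cat A \comma a$ has a strict initial object (namely the unique arrow $\init \to a$, which inherits strictness from the strict initial $\init$ of $\cat A$) and that every direct image functor $p_!$ creates it.

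The identification itself is the core of the argument, and it is a routine unfolding of definitions. On objects, a triple $(a, f \colon a \to \Phi(b), b)$ of $\cat A \comma \Phi$ matches a pair $(b, (a, f))$ with $(a, f) \in \Hh(\Phi(b)) = \cat A \comma \Phi(b)$. On morphisms from $(a, f, b)$ to $(a', f', b')$, the data of a commuting square of the form $\Phi(\beta) \circ f = f' \circ \alpha$ is exactly the datum of a morphism $\beta \colon b \to b'$ in $\cat B$ together with a morphism $\alpha \colon a \to a'$ in $\cat A \comma \Phi(b')$ from $(\Phi(\beta))_!(f) = \Phi(\beta) \circ f$ to $f'$, which is precisely the hom-set description \eqref{eq:opgc.homsets} specialised to $\Hh \circ \Phi$. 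Both identifications are compatible with the projections to $\cat B$.

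The main (minor) obstacle is verifying that the change-of-base functors of $\Hh \circ \Phi$ still create strict initial objects; but this is immediate because $(\Hh \circ \Phi)_\beta = (\Phi(\beta))_!$ is one of the functors already handled in the previous lemma. With that in hand, Corollary~\ref{cor:pbopfib.preserve} applies directly and yields that $U$ preserves effective descent morphisms.
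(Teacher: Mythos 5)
Your proposal is correct and follows essentially the same route as the paper: identify $\cat A \comma \Phi \to \cat B$ with the op-Grothendieck construction $\int^{\cat B} \Hh \circ \Phi \to \cat B$ of the direct-image pseudofunctor and invoke the composition corollary, with the strict-initial-object hypotheses already checked in the codomain-opfibration lemma. You simply spell out the identification in more detail, and you cite Corollary~\ref{cor:pbopfib.preserve} (the opfibration version), which is indeed the one matching the construction $\int^{\cat B} \Hh \circ \Phi$ used in the paper's own proof.
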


\begin{proof}
  We apply Corollary \ref{cor:pbfib.preserve} to the composite
  \begin{equation*}
    \begin{tikzcd}
      \cat B \ar[r,"\Phi"]
        & \cat A \ar[r,"\Hh"]
        & \CAT
    \end{tikzcd}
  \end{equation*}
  and we note that \( \int^{\cat B} \Hh \circ \Phi \eqv \cat A \comma \Phi \).
\end{proof}

We also consider the Grothendieck construction \( \int_{\cat A^\op} \Hh \to
\cat A^\op \) of \( \Hh \); the objects of \( \int_{\cat A^\op} \Hh \) still
are the morphisms of \( \cat A \), while a morphism \( f \to g \) is a pair \(
(p,q) \) of morphisms such that the following square commutes:
\begin{equation*}
  \begin{tikzcd}
    a \ar[r,"p"] \ar[d,"f",swap]
      & b \ar[d,"g"] \\
    c & d \ar[l,"q"]
  \end{tikzcd}
\end{equation*}

By Theorem \ref{thm:fib.main}, we obtain:
\begin{lemma}
  The fibration \( \int_{\cat A^\op} \Hh \to \cat A^\op \) preserves effective
  descent morphisms.
\end{lemma}

\subsection{Co-Kleisli categories for writer comonads:}
\label{subsect:cokleisli}

Let \( \cat A \) be a category with binary products and a strict initial
object. For each \( a \in \cat A \), the functor \( a \times - \) underlies a
comonad on \( \cat A \). We denote its co-Kleisli category by \( \CoKl(a
\times -) \), and we recall it may be given as follows:
\begin{itemize}[label=--]
  \item
    \( \ob \CoKl(a \times -) = \ob \cat A \),
  \item
    \( \CoKl(a \times -)(x,y) = \cat A(a \times x, y) \),
  \item
    Identities are the product projections \( a \times x \to x \),
  \item
    Composition of morphisms \( f \colon a \times x \to y \), \( g \colon a
    \times y \to z \) is given by 
    \begin{equation*}
      \begin{tikzcd}
        a \times x \ar[r,"\Delta \times \id_x"]
          & a \times a \times x \ar[r,"\id_a \times f"]
          & a \times y \ar[r,"g"] 
          & z
      \end{tikzcd}
    \end{equation*}
\end{itemize}

For each morphism \( f \colon a \to b \), we let \( \Ff_f \colon
\CoKl(b \times -) \to \CoKl(a \times -) \) be given by the identity on
objects, and 
\begin{align*}
  \cat A(b \times x, y) &\to \cat A(a \times x, y) \\
  p &\mapsto p \circ (f \times \id_x)
\end{align*}
on hom-sets. This defines a pseudofunctor \( \Ff \colon \cat A^\op \to \CAT
\).

The total category of the Grothendieck construction \( U \colon \int_{\cat A}
\Ff \to \cat A \) of \( \Ff \) is the full subcategory of \( \cat A^2 \)
consisting of the product projections. To be precise, the objects of \(
\int_{\cat A} \Ff \) are pairs of objects \( (a,x) \), corresponding to
product projections \( a \times x \to a \), and a morphism \( (f,g) \colon
(a,x) \to (b,y) \) consists of morphisms \( f \colon a \times x \to b \times y
\), \( g \colon a \to b \) such that the following diagram commutes:
\begin{equation*}
  \begin{tikzcd}
    a \times x \ar[r,"f"] \ar[d]
      & b \times y \ar[d] \\
    a \ar[r,"g",swap] & b
  \end{tikzcd}
\end{equation*}

To ensure that \( \Ff \) is within the scope of Theorem \ref{thm:fib.main}, we
have

\begin{lemma}
  \label{lem:cokl.fibers}
  The co-Kleisli categories \( \CoKl(a \times -) \) have strict initial
  objects, and the change-of-base functors \( \Ff_f \) preserve them.
\end{lemma}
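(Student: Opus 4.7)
The plan rests on one clean observation: by the very definition of the co-Kleisli category, $\CoKl(a \times -)(x,y) = \cat A(a \times x, y)$, so an object $z$ is initial in $\CoKl(a \times -)$ if and only if $a \times z$ is initial in $\cat A$. With this bridge between initiality in the two categories, everything else falls into place.

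First I would check that the strict initial $\init$ of $\cat A$ remains initial in every $\CoKl(a \times -)$. By the preliminary lemma on strict initial objects (which applies since $\cat A$ has binary products and $\init$ is strict), we have $a \times \init \iso \init$, and the observation above then gives initiality of $\init$ in $\CoKl(a \times -)$.

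Next, for strictness in $\CoKl(a \times -)$: given any $\CoKl$-morphism $x \to \init$, unwinding the hom-set identity produces a morphism $a \times x \to \init$ in $\cat A$; strictness of $\init$ in $\cat A$ then forces $a \times x \iso \init$, which, again by the observation, makes $x$ itself initial in $\CoKl(a \times -)$. Hence $x \iso \init$ there, as required. Finally, each change-of-base functor $\Ff_f \colon \CoKl(b \times -) \to \CoKl(a \times -)$ is the identity on objects, so it sends $\init$ to $\init$ and trivially preserves the strict initial.

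I do not anticipate any real obstacle in the argument; the only point requiring a little care is tracking in which category each isomorphism lives, but the hom-set identity $\CoKl(a \times -)(x,y) = \cat A(a \times x, y)$ translates between the two effortlessly, and no composition in $\CoKl$ (which would involve the diagonal $\Delta \colon a \to a \times a$) needs to be computed directly.
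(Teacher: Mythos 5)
Your proof is correct and rests on the same key fact as the paper's, namely that strictness of $\init$ in $\cat A$ gives $a \times \init \iso \init$ (the preliminary lemma on strict initial objects); the only cosmetic difference is that you argue directly from the hom-set identity $\CoKl(a \times -)(x,y) = \cat A(a \times x, y)$, whereas the paper first identifies $\CoKl(a \times -)$ with the full subcategory of $\cat A \comma a$ spanned by the product projections. Your bridge ``$z$ is initial in $\CoKl(a\times-)$ iff $a \times z$ is initial in $\cat A$'' is valid and, if anything, slightly more self-contained.
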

\begin{proof}
  We recall that the category of coalgebras for \( a \times - \) is equivalent
  to \( \cat A \comma a \) -- this allows us to view the co-Kleisli category
  \( \CoKl(a \times -) \) as the full subcategory of \( \cat A \comma a \)
  whose objects are the product projections \( d_0 \colon a \times x \to a \).

  Since \( a \times \init \iso \init \) for all \( a \), this implies that \(
  \CoKl(a \times -) \) has a strict initial object -- \( \init \) itself --
  for all \( a \).  These are preserved by the identity-on-objects
  change-of-base functors.
\end{proof}

Thus, as a corollary, we obtain:

\begin{lemma}
  The fibration \( U \colon \int_{\cat A} \Ff \to \cat A \) preserves
  effective descent morphisms.
\end{lemma}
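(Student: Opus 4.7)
The plan is to invoke Theorem~\ref{thm:fib.main} directly. That theorem says that whenever the pseudofunctor $\Ff \colon \cat A^\op \to \CAT$ has fibers with strict initial objects that are preserved by every change-of-base functor, the associated Grothendieck construction $U \colon \int_{\cat A} \Ff \to \cat A$ preserves effective descent morphisms. So the only thing I need to do is confirm that our specific $\Ff$ meets these hypotheses.

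Fortunately, this has already been done in Lemma~\ref{lem:cokl.fibers}: the co-Kleisli category $\CoKl(a \times -)$ admits the initial object $\init$ as a strict initial object (using that $a \times \init \iso \init$ when $\init$ is strict in $\cat A$ and that $\CoKl(a \times -)$ sits as a full subcategory of $\cat A \comma a$ on the product projections), and the change-of-base functors $\Ff_f$, which act as the identity on objects, manifestly preserve this initial object. So the plan is simply to cite Lemma~\ref{lem:cokl.fibers} to discharge both hypotheses of Theorem~\ref{thm:fib.main} and conclude.

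There is no real obstacle here, since the lemma and the theorem were established precisely so that this corollary would follow mechanically; the only subtle point worth flagging explicitly is the identification of the fiber with a full subcategory of $\cat A \comma a$, which ensures that strictness of the initial object in $\cat A$ transfers to strictness in $\CoKl(a \times -)$. The proof will therefore be one sentence.
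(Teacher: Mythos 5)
Your proposal is correct and matches the paper exactly: the lemma is stated there as an immediate corollary of Theorem~\ref{thm:fib.main}, with Lemma~\ref{lem:cokl.fibers} supplying the verification that the fibers $\CoKl(a \times -)$ have strict initial objects preserved by the identity-on-objects change-of-base functors. Nothing further is needed.
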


We may also consider the op-Grothendieck construction \( \int^{\cat A^\op} \Ff
\to \cat A^\op \) of \( \Ff \). A morphism \( (a,x) \to (b,y) \) in its total
category corresponds to a pair of morphisms \( f \colon a \times x \to b
\times y \), \( g \colon b \to a \) such that the following diagram commutes:
\begin{equation*}
  \begin{tikzcd}
    a \times x \ar[r,"f"] \ar[d]
      & b \times y \ar[d] \\
    a & b \ar[l,"g"]
  \end{tikzcd}
\end{equation*}

Moreover, we note that the identity-on-objects change-of-base functors create
initial objects, whence we may apply Lemma \ref{lem:opfib.result} to conclude
that
\begin{lemma}
  The opfibration \( \int^{\cat A^\op} \Ff \to \cat A^\op \) preserves
  effective descent morphisms.
\end{lemma}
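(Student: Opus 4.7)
The plan is to apply Theorem~\ref{thm:opfib.main} directly to the op-Grothendieck construction of \( \Ff \colon \cat A^\op \to \CAT \), taking the base category to be \( \cat A^\op \). The theorem requires two ingredients: each fiber \( \Ff_a = \CoKl(a \times -) \) should possess a strict initial object, and each change-of-base functor \( \Ff_f \) should create it. This mirrors the script used for the prior lemma on \( \int_{\cat A} \Ff \to \cat A \), only invoking the opfibration-side theorem.

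For the first ingredient, Lemma~\ref{lem:cokl.fibers} does the work already: viewed inside \( \CoKl(a \times -) \), the object \( \init \) of \( \cat A \) is strictly initial for every \( a \), as a consequence of \( a \times \init \iso \init \) together with strictness of \( \init \) in \( \cat A \). The same lemma observes that the change-of-base functors \( \Ff_f \) preserve these strict initial objects.

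For the second ingredient, I use the key observation already highlighted in the surrounding discussion: each \( \Ff_f \) acts as the identity on the set of objects --- it only modifies hom-sets via precomposition with \( f \times \id_x \). Consequently, the preimage of \( \init \) under \( \Ff_f \) is precisely \( \{\init\} \), and preservation of \( \init \) automatically upgrades to creation. With both ingredients established, Theorem~\ref{thm:opfib.main} yields that \( \int^{\cat A^\op} \Ff \to \cat A^\op \) preserves effective descent morphisms.

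I do not expect any genuine obstacle: the proof is a routine verification of hypotheses, with all substantive content pre-packaged in Theorem~\ref{thm:opfib.main} and Lemma~\ref{lem:cokl.fibers}. The only mildly delicate point --- namely, passing from preservation to creation --- is handled instantly by the identity-on-objects nature of \( \Ff_f \), which collapses any uniqueness condition.
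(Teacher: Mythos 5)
Your proof is correct and takes essentially the same route as the paper: the paper likewise reduces the statement to Theorem~\ref{thm:opfib.main} (via Lemma~\ref{lem:opfib.result}), citing Lemma~\ref{lem:cokl.fibers} for the strict initial objects in the fibers and the identity-on-objects nature of the change-of-base functors \( \Ff_f \) to upgrade preservation to creation. There is no difference in decomposition or substance.
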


\subsection{Topological functors:}
\label{subsect:topol}

We recall from \cite{Wyl71} that any topological functor \( U \colon \cat C
\to \cat A \) (with possibly large fibers) is the Grothendieck construction of
a bifibration \( \Ff \colon \cat A^\op \to \CAT \) whose fibers \( \Ff_a \)
are (large-)complete thin categories. In particular, the fibers \( \Ff_a \)
have strict initial objects.

\begin{lemma}
  \label{lem:topol}
  If the change-of-base functors \( \Ff_f \colon \Ff_b \to \Ff_a \) preserve
  initial objects (bottom elements), then \( U \colon \cat C \to \cat A \)
  preserves effective descent morphisms.
\end{lemma}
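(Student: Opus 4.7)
The plan is simply to reduce the statement to Theorem~\ref{thm:fib.main} applied to the bifibration $\Ff \colon \cat A^\op \to \CAT$ whose Grothendieck construction recovers $U$. The result recalled from \cite{Wyl71} already gives us that each fiber $\Ff_a$ is a large-complete thin category, and in particular has a bottom element, i.e.\ an initial object; the hypothesis of the lemma guarantees that each change-of-base functor $\Ff_f$ preserves these initial objects. Hence the only hypothesis of Theorem~\ref{thm:fib.main} still to be verified is that these initial objects are \emph{strict}.

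The key observation I would use is that an initial object in any thin category is automatically strict. Indeed, suppose $\Ff_a(x,\init_a)$ is non-empty; composing the given morphism $x \to \init_a$ with the unique morphism $\init_a \to x$ in either order yields an endomorphism which must coincide with the identity by thinness, and so $x \iso \init_a$. Applying this to each fiber $\Ff_a$ completes the verification.

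With strictness in hand, Theorem~\ref{thm:fib.main} applies verbatim to conclude that the Grothendieck construction projection, which is precisely the topological functor $U \colon \cat C \to \cat A$, preserves effective descent morphisms. No serious obstacle is anticipated: the large-completeness of the fibers from \cite{Wyl71} serves only to ensure the existence of initial objects in the possibly-large fibers, and plays no further role in the argument; preservation by $\Ff_f$ is given by hypothesis; and strictness is free from thinness.
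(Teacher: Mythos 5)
Your proposal is correct and follows exactly the route the paper intends: the paper gives no explicit proof of Lemma~\ref{lem:topol}, but the surrounding text (recalling from \cite{Wyl71} that the fibers are large-complete thin categories and asserting ``in particular, the fibers have strict initial objects'') makes clear that the argument is precisely an application of Theorem~\ref{thm:fib.main}. Your added observation that initial objects in thin categories are automatically strict is the one small step the paper leaves implicit, and you verify it correctly.
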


As a consequence of Lemma~\ref{lem:topol}, the quintessential topological
functor \( \Top \to \Set \) preserves effective descent morphisms.

\subsection{Lax comma categories:}
\label{subsect:laxcomma}
Recall that, given a $2$-category \( \bicat A \) and an object \( X \) of \(
\bicat A \), the lax comma category \( \bicat A \lcomma X \) is defined as the
total category of the Grothendieck construction of the representable 2-functor
\( \bicat A(-,X) \colon \bicat A^\op \to \CAT \)\footnote{The Grothendieck
construction can be defined for pseudofunctors \( \bicat A^\op \to \CAT \) for
\( \bicat A \) a 2-category, outputting another 2-category, but for our
purposes, we are implicitly taking the domain of the 2-functor to be the
underlying category of \( \bicat A^\op \).} -- see, for instance, \cite{CL24,
CLP24, LV24}.

Unpacking the definition, the objects of \( \bicat A \lcomma X \) are
morphisms \( f \colon A \to X \) with fixed codomain \(X\), while a morphism
\( (p, \pi) \colon f \to g \) given by a 2-cell \( \pi \colon f \to g\cdot p
\), which is depicted in the following diagram:
\begin{equation*}
  \begin{tikzcd}[row sep=large]
    A \ar[rr,"p"] 
      \ar[rr,Rightarrow,shorten=8.5mm,shift right=5.5mm,"\pi"]
      \ar[rd,"f",swap]
    && B  \ar[ld, "g"] \\
    & X
  \end{tikzcd}
\end{equation*}

The composition of morphisms \( (p,\pi) \colon (A,f) \to (B,g) \) and \(
(q,\chi) \colon (B,g) \to (C,h) \) is given by the pair \( (q\cdot p, (\chi
\cdot p) \circ \pi) \), which may be obtained diagramatically by pasting the
2-cells as follows:
\begin{equation*}
  \begin{tikzcd}[row sep=large]
    A \ar[rr,"p"] 
      \ar[rr,Rightarrow,shorten=6.5mm,shift right=5.5mm,"\pi"]
      \ar[rrd,"f",swap,bend right]
    && B  
      \ar[d,"g"] 
      \ar[rr,"q"]
      \ar[rr,Rightarrow,shorten=6.5mm,shift right=5.5mm,"\chi"]
    && C  \ar[lld,"h",bend left]\\ 
    && X
  \end{tikzcd}
\end{equation*}

This work was developed from the examination of the core ideas behind the
preservation results for the forgetful functor \( \bicat A \lcomma X  \to \bicat A \), including the specific cases studied in \cite{CL23, CLP24, CP24}. 
For instance, it was shown that \( \Ord \lcomma X \to \Ord \)
preserves effective descent morphisms when \( X \) is an complete ordered set
\cite[Theorem 3.3]{CL23}, and that \( \Cat \lcomma \cat A \to \Cat \)
preserves effective descent morphisms when \( \cat A \) has pullbacks and a
strict initial object \cite[Theorem 3.7]{CLP24}. We will show that both
results can be obtained (and generalized) in the present setting.

From this point forward, we assume that \(X\) is an object of \( \bicat A \)
such that \( \bicat A(A,X) \) has a strict initial object for every object \(
A \) in \( \bicat A \), and that the change-of-base functor \( - \cdot f
\colon \bicat A(B,X) \to \bicat A(A,X) \) preserves them for every morphism \(
f \colon A \to B \) in \( \bicat A \). As an immediate consequence of Theorem
\ref{thm:fib.main}, we obtain the following result:

\begin{lemma}
  The fibration \( \bicat A \lcomma X \to \bicat A \) preserves effective
  descent morphisms.
\end{lemma}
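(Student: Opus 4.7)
The plan is to apply Theorem~\ref{thm:fib.main} directly with the pseudofunctor \( \Ff = \bicat A(-,X) \colon \bicat A^\op \to \CAT \). By the very definition of \( \bicat A \lcomma X \) recalled above, the forgetful functor \( \bicat A \lcomma X \to \bicat A \) coincides (up to isomorphism) with the projection \( \int_{\bicat A} \Ff \to \bicat A \) of the Grothendieck construction of \( \Ff \), so Theorem~\ref{thm:fib.main} will give exactly the desired preservation property once its hypotheses are verified.

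The first hypothesis, that \( \Ff_A = \bicat A(A,X) \) has a strict initial object for every object \( A \) of \( \bicat A \), is precisely our standing assumption on \( X \). The second hypothesis, that the change-of-base functor \( \Ff_f = - \cdot f \colon \bicat A(B,X) \to \bicat A(A,X) \) preserves these strict initial objects for every morphism \( f \colon A \to B \), is again exactly what we are assuming. Hence both conditions of Theorem~\ref{thm:fib.main} are immediate, and the conclusion follows.

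There is essentially no obstacle in this proof beyond unpacking the identification between \( \bicat A \lcomma X \) and the total category of the Grothendieck construction of \( \bicat A(-,X) \); this identification is the content of the definition recalled before the lemma (and is explained in the cited references such as \cite{CL24, CLP24, LV24}). The main work of this subsection therefore lies upstream: namely, in organizing the lax comma construction as a Grothendieck construction so that the general machinery of Section~\ref{sect:preservation} applies. Once this viewpoint is in place, the lemma is a one-line corollary of Theorem~\ref{thm:fib.main}.
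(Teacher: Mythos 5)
Your proof is correct and is exactly the argument the paper intends: the lemma is stated as an immediate consequence of Theorem~\ref{thm:fib.main} applied to the pseudofunctor \( \bicat A(-,X) \), whose Grothendieck construction is \( \bicat A \lcomma X \to \bicat A \) by definition, with the standing assumptions on \( X \) supplying precisely the strict-initial-object hypotheses. Nothing further is needed.
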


We recover \cite[Theorem 3.3]{CL23} by taking \( \bicat A = \Ord \), and \( X
\) an ordered set with a bottom element, as well as \cite[Theorem 3.7]{CLP24}
by taking \( \bicat A = \Cat \), and \( X \) a category with a strict initial
object.

If we have a 2-functor \( \Phi \colon \bicat B \to \bicat A \) between
2-categories, we may consider the composite 2-functor \( \bicat A(\Phi(-),X)
\colon \bicat B^\op \to \CAT \) for \( X \) in \( \bicat A \), and we denote
its Grothendieck construction as \( \Phi \lcomma X \to \bicat B \). In case \(
\Phi \) is fully faithful, we denote the total category as \( \bicat B \lcomma
X \) instead.

As a consequence of Corollary \ref{cor:pbfib.preserve}, we conclude that

\begin{lemma}
  The fibration \( \Phi \lcomma X \to \bicat B \) preserves effective descent
  morphisms.
\end{lemma}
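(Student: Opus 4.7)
The plan is to reduce the statement directly to Corollary~\ref{cor:pbfib.preserve}, in parallel with the way the previous lemma reduced the case $\Phi = \id$ to Theorem~\ref{thm:fib.main}. First, I would identify the 2-functor $\bicat A(\Phi(-),X)\colon \bicat B^\op \to \CAT$ with the composite $\Ff \circ \Phi^\op$, where $\Ff = \bicat A(-,X)\colon \bicat A^\op \to \CAT$ is the representable 2-functor at $X$. By definition of $\Phi \lcomma X$ given just above the statement, the Grothendieck construction of this composite, together with its canonical projection, is precisely the fibration $\Phi \lcomma X \to \bicat B$, so it coincides with the fibration $\int_{\bicat B} \Ff \circ \Phi^\op \to \bicat B$ appearing in the corollary.

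Next, I would verify that $\Ff$ satisfies the hypotheses of Corollary~\ref{cor:pbfib.preserve}. This is immediate from the standing assumptions on $X$: each fiber $\Ff_A = \bicat A(A,X)$ carries a strict initial object, and each change-of-base functor $\Ff_f = -\cdot f\colon \bicat A(B,X) \to \bicat A(A,X)$ preserves these initial objects. Hence $\Ff$ fits the corollary's setting, and the underlying 1-functor of $\Phi$ serves as the functor $\Phi$ in the statement of the corollary.

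With these identifications in place, applying Corollary~\ref{cor:pbfib.preserve} yields directly that $\Phi \lcomma X \to \bicat B$ preserves effective descent morphisms. The only point requiring care -- and the closest thing to an obstacle -- is the bookkeeping that aligns the total category $\int_{\bicat B} \Ff \circ \Phi^\op$ with $\Phi \lcomma X$ and checks that the hypotheses on $X$ transport through precomposition with $\Phi^\op$ without extra assumptions; once this is noted, no further computation is required.
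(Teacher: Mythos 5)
Your proposal is correct and is exactly the paper's argument: the paper also identifies $\bicat A(\Phi(-),X)$ with the composite $\bicat A(-,X)\circ\Phi^\op$, notes that the standing hypotheses on $X$ give strict initial objects in the fibers preserved by change-of-base, and invokes Corollary~\ref{cor:pbfib.preserve}. No differences worth noting.
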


As an important example, we conclude at once that \( \Fam(X) \to \Set \)
preserves effective descent morphisms for any category \( X \) with a strict
initial object, by taking \( \Phi \) to be the ``discrete category'' functor
\( \Set \to \Cat \) -- it should be noted that \( \Fam(X) \eqv \Set \lcomma X
\). This observation is particularly helpful in the characterization of
effective descent morphisms in \( \Fam(X) \), a task which started in
\cite{Pre24}.

\subsection{Lax direct image:}
\label{subsect:laxdirect}

Let \( \bicat A \) be a 2-category with a strict initial object. We consider
the pseudofunctor 
\begin{align*}
  \Hh \colon \bicat A &\to \CAT \\ 
  x &\mapsto \bicat A \lcomma x \\ 
  p &\mapsto p_!
\end{align*}
where \( p_! \colon \bicat A \lcomma x \to \bicat A \lcomma y \) is given by
composing (whiskering) with \(p\) on morphisms (2-cells).

The total category of its op-Grothendieck construction \( \int^{\cat
A} \Hh \to \bicat A \) has
\begin{itemize}[label=--]
  \item
    objects are the morphisms of \( \bicat A \),
  \item
    morphisms \( f \to g \) consist of triples \( (p,h,\pi) \) where \( \pi
    \colon p \cdot f \to g \cdot h \) is a 2-cell, which may be depicted as
    \begin{equation*}
      \begin{tikzcd}
        a \ar[r,"h"] \ar[d,"f",swap]
          & b \ar[d,"g"] \\
        x \ar[r,"p",swap]
          \ar[ru,Rightarrow,"\pi",shorten=3mm]
        & y
      \end{tikzcd}
    \end{equation*}
\end{itemize}

Since \( \Hh_a \) has an initial object for all \( a \) in \( \bicat A \), and
the change-of-base functors create them for all \( f \colon a \to b \) in \(
\bicat A \), from Theorem \ref{thm:opfib.main} we conclude that:

\begin{lemma}
  The opfibration \( \int^{\bicat A} \Hh \to \bicat A \) preserves effective descent
  morphisms.
\end{lemma}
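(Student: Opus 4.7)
The plan is to reduce the statement to a direct application of Theorem~\ref{thm:opfib.main}. This requires verifying two conditions on the pseudofunctor $\Hh$: first, that each fiber $\Hh_a = \bicat A \lcomma a$ has a strict initial object; second, that every change-of-base functor $\Hh_f = f_!$ creates these initial objects. Once these are in place, the result follows immediately.

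For the first condition, I would take $\init_a$ to be the unique $1$-morphism $!_a \colon \init \to a$, where $\init$ is the strict initial object of $\bicat A$ (interpreted in such a way that $\bicat A(\init,a)$ is terminal for every $a$, so that both $1$- and $2$-cells out of $\init$ are unique). Given any object $g \colon c \to a$ of $\bicat A \lcomma a$, a morphism $!_a \to g$ consists of a $1$-cell $p \colon \init \to c$ together with a $2$-cell $\pi \colon !_a \Rightarrow g \cdot p$; both $p$ and $\pi$ are forced by initiality of $\init$, so the hom-set is a singleton. Strictness then follows from strictness of $\init$ in $\bicat A$: any morphism in $\bicat A \lcomma a$ whose codomain is $!_a$ provides a $1$-cell into $\init$, forcing the domain to be $\init$ itself, hence isomorphic to $!_a$ in $\bicat A \lcomma a$.

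For the second condition, I would unpack the action of $f_! \colon \bicat A \lcomma a \to \bicat A \lcomma b$: on an object $g \colon c \to a$ it returns $f \cdot g \colon c \to b$, and on a morphism $(p,\pi)$ it returns $(p, f\cdot \pi)$. Preservation of the initial object is immediate, since $f_!(!_a) = f\cdot !_a = !_b$ by initiality of $\init$. For reflection, suppose $f_!(g) = f \cdot g$ is initial in $\bicat A \lcomma b$. Then $f \cdot g \iso !_b$, which in particular forces the domain $c$ of $g$ to be $\init$ by strictness of $\init$ in $\bicat A$; hence $g \iso !_a$ is initial in $\bicat A \lcomma a$. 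Preservation together with reflection yields creation.

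The main obstacle, such as it is, is purely interpretational: one must be careful that ``strict initial object'' in the $2$-categorical setting $\bicat A$ is strong enough to give both uniqueness of $1$-cells and uniqueness of $2$-cells from $\init$ (equivalently, that $\bicat A(\init,-)$ lands in the terminal category), since the lax comma structure appeals to both levels of data. With this convention fixed, the whole argument is a direct verification, and the conclusion drops out of Theorem~\ref{thm:opfib.main}.
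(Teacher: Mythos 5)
Your proof is correct and follows essentially the same route as the paper: identify \( !_a \colon \init \to a \) as a strict initial object of the fiber \( \bicat A \lcomma a \), verify that the change-of-base functors \( f_! \) create these, and invoke Theorem~\ref{thm:opfib.main}. The paper compresses this verification into a single preliminary sentence, so your explicit check (including the observation that \( \init \) must be initial in the \(2\)-categorical sense, i.e.\ \( \bicat A(\init,-) \) terminal, to force uniqueness of the \(2\)-cell component) only supplies detail the paper leaves implicit.
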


The total category of its Grothendieck construction \( U \colon \int_{\cat
A^\op} \Hh \to \bicat A^\op \) has
\begin{itemize}[label=--]
  \item
    objects: the morphisms of \( \bicat A \),
  \item
    morphisms \( f \to g \): triples \( (p,h,\pi) \) where \( \pi \colon g
    \to p \cdot f \cdot h \) is a 2-cell, which may be depicted as
    \begin{equation*}
      \begin{tikzcd}
        b \ar[r,"h"] \ar[d,"g",swap]
          \ar[r,"\pi",shift right=5mm,Rightarrow,shorten=2mm]
          & a \ar[d,"f"] \\
        y 
        & x \ar[l,"p"]
      \end{tikzcd}
    \end{equation*}
\end{itemize}

Since \( \Hh_a \) has an initial object for all \( a \) in \( \bicat A \), and
the change-of-base functors preserve them for all \( f \colon a \to b \) in \(
\bicat A \), we conclude that:

\begin{lemma}
  The fibration \( \int_{\cat A^\op} \Hh \to \cat A^\op \) preserves effective
  descent morphisms.
\end{lemma}

\section{Epilogue}
  \label{sect:future-work}
  As explained in the introduction, we have restricted our present work to the context where the notion of bundles over an object is given by suitable comma categories. However, there is much to be explored in generalized settings where bundles are defined by alternative notions.

For instance, we highlight the work of \cite{Sob04, LPS23}, where effective descent morphisms with respect to indexed categories of discrete fibrations and the indexed category of  split fibrations were studied.

Building upon the work mentioned above, our project on the study of descent for generalized categorical structures~\cite{PL23, LPS23, PL24, PreTh} aims to investigate effective descent morphisms with respect to generalized notions of bundles of (generalized) categorical structures, as briefly outlined below.

\subsection{Effective descent morphisms w.r.t. an orthogonal factorization systems}
Every orthogonal factorization system induces a notion of bundles in a category. We are particularly interested in the case of the category of categories $\CAT$.

For each category $\cat A$, we may consider the category $\FF(A)$
which is the full  subcategory of $\CAT \comma \cat A $ consisting of the
fully faithful functors with codomain $\cat A$. This gives us an indexed category \( \FF \colon \cat A^\op \to \CAT \) and a corresponding notion of effective descent morphism hasn't been studied in the literature.

More generally, the right class of morphisms of any orthogonal factorization system in $\CAT $ provides us with such an indexed category and hence a notion of effective descent morphism w.r.t. the given orthogonal factorization.   Notably, this was explored by \cite{Sob04} in the case of \cite{SW73}, where the right class of morphisms consists of discrete fibrations. However, following the lines above, the orthogonal factorization defined in \cite{LS22}, whose right class of morphisms consists of \textit{discrete splitting bifibrations} as introduced therein, provides us with a meaningful notion of effective descent morphisms yet to be explored.

\subsection{Effective descent morphisms w.r.t. a 2-monad}

We have a specific aim pertaining to freely generated categorical
structures~\cite{Luc17, LPV24, LV24a} and two-dimensional monad
theory -- see \cite{BKP89, Mar99,  Bou14, Luc16, Luc19} for the
general setting of $2$-dimensional monad theory.

We start by observing that any pseudomonad $\cat T $ on $\CAT$ provides us with a relevant
notion of bundles, called herein $\cat T$-bundles over a category $\cat A  $,
given by the $\cat T \left( \cat A \right) $. In this setting, we are posed with
the following question: assuming that $p \colon a \to b$ has a left (or right)
adjoint $p_*$, we end up with a relevant notion of descent factorization  
\begin{equation*}
  \begin{tikzcd}
    \cat T(a) \ar[rr, "\cat T(p_*)"] \ar[rd,"\mathcal K_p",swap]
      && \cat T(b) \\
    & \Desc_{\cat T}(p) \ar[ru]
  \end{tikzcd}
\end{equation*}
following the recipe explained in the introduction. We, then, are interested in further 
understanding when $p$ is of effective descent with respect to $\cat T$-bundles.
We are particularly interested in the case where $\cat T$ is a Kock-Zoberlein
pseudomonad, also known as lax idempotent pseudomonads~\cite{Str80, Zöb76, 
Koc95, Mar97, KL97} which encompasses most of the free completion pseudomonads~\cite{KL00, PCW00, LV24a, LPV24}.

Still, within the setting of 2-monads, we have a particular interest in the notion of bundles provided by freely generated categorical structures, including free distributive, completely distributive, and extensive categories over categories~\cite{MRW12, LV24a, LPV24}.

\subsection{Future work}
Since the contexts above are not encompassed by the present work, we aim to extend our results on preservation properties to the general setting of \cite{Luc18a}, which provides a framework that covers all the settings and open problems described above.

\bibliographystyle{plain}
\bibliography{references}

\end{document}